\theoremstyle{plain}
\newtheorem{theorem}{Theorem}[section]
\newtheorem{lemma}[theorem]{Lemma}
\newtheorem{example}[theorem]{Example}
\newtheorem{corollary}[theorem]{Corollary}
\newcommand{\N}{\mathbb{N}}
\theoremstyle{remark}
\newtheorem*{remark}{Remark}
\newcommand{\Z}{\mathbb{Z}}
\begin{document}

%   LAGB-2013-4445
\title{On Irreducible Divisor Graphs in Commutative Rings with Zero-Divisors}
        \date{\today}
\author{Christopher Park Mooney}
%\author{M. Axtell, N. Baeth, C.P. Mooney, and J. Stickles}
%\address{Axtell Address}
%\email{Axtell Email}
%\address{Baeth Address}
%\email{Baeth Email}
%\author{Christopher Park Mooney}
\address{Reinhart Center \\ Viterbo University \\ 900 Viterbo Drive \\ La Crosse, WI 54601}
\email{cpmooney@viterbo.edu}
%\address{Stickles Address}
%\email{Stickles Email}

\keywords{factorization, zero-divisors, commutative rings, zero-divisor graphs, irreducible divisor graphs}

% Manuscript ID#:     Hike2718pea is the login password for comm. in algebra LAGB-2012-4016

\keywords{factorization, zero-divisors, commutative rings, zero-divisor graphs, irreducible divisor graphs}
\begin{abstract} In this paper, we continue the program initiated by I. Beck's now classical paper concerning zero-divisor graphs of commutative rings.  After the success of much research regarding zero-divisor graphs, many authors have turned their attention to studying divisor graphs of non-zero elements in the ring, the so called irreducible divisor graph.  In this paper, we construct several different associated irreducible divisor graphs of a commutative ring with unity using various choices for the definition of irreducible and atomic in the literature.  We continue pursuing the program of exploiting the interaction between algebraic structures and associated graphs to further our understanding of both objects.  Factorization in rings with zero-divisors is considerably more complicated than integral domains; however, we find that many of the same techniques can be extended to rings with zero-divisors.  This allows us to not only find graph theoretic characterizations of many of the finite factorization properties that commutative rings may possess, but also understand graph theoretic properties of graphs associated with certain commutative rings satisfying nice factorization properties.
\\
\vspace{.1in}\noindent \textbf{2010 AMS Subject Classification:} 13A05, 13E99, 13F15, 5C25

\end{abstract}
\maketitle
\section{Introduction}  
\indent In this article, $R$ will denote a commutative ring with unity, not equal to zero.   Let $R^*=R-\{0\}$, $U(R)$ be the units of $R$, and $R^{\#}=R^*-U(R)$, the non-zero, non-units of $R$.  We will use $D$ to denote an integral domain.  We will use $G=(V,E)$ to denote a graph $G$ with $V$, the set of vertices, and $E$, the set of edges.  Our graphs will be undirected and not necessarily simple (we allow loops but no multi-edges).  We will denote an edge between vertices $a,b \in V$ by juxtaposition, as in $ab\in E$.
\\
\indent Recently, the study of the relationship between graphs and rings has become quite popular.  In many ways this program began with the now classic paper in 1988, by Istvan Beck, \cite{Beck}.  He introduced, for a commutative ring $R$, the notion of a zero-divisor graph $\Gamma(R)$.  Traditionally, the vertices of $\Gamma(R)$ are the set of zero-divisors and there is an edge between distinct $a,b \in Z(R)$ if $ab=0$.  One thing to note is that this is a simple graph and so there are no loops even if $x^2=0$.  This has been the subject of some debate as to whether one should allow loops or not.  Another modification of the original zero-divisor graph that has become quite standard is to remove $0$ from the vertex set, so $V=Z(R)^*$.  The zero-divisor graph has attracted a significant amount of attention recently having been studied and developed by many authors including, but not limited to D.D. Anderson, D.F. Anderson, M. Axtell, A. Frazier, J. Stickles, A. Lauve, P.S. Livingston, and M. Naseer in \cite{Andersonzdg, Axtellzdg, Davidanderson, Livingston, Mooney}.  
\\
\indent There have been several generalizations and extensions of this concept, but in this paper, we focus on the notion of an irreducible divisor graph first formulated by J. Coykendall and J. Maney in \cite{Coykendall} for integral domains.  Instead of looking exclusively at the divisors of zero in a ring, the authors restrict to a domain $D$ and choose any non-zero, non-unit $x\in D$.  They study the relationships between the irreducible divisors of $x$.  This study provides much insight into many of the factorization properties of the domain by providing a graphical representation of the multiplicative structure.  Recently, M. Axtell, N. Baeth, and J. Stickles presented several nice results about factorization properties of domains based on their associated irreducible divisor graphs, in \cite{Axtellidgd}.  They have also extended these definitions of irreducible divisor graphs to rings with zero-divisors using a particular choice of irreducible and associate, in \cite{Axtellidgzd}.  Generalized $\tau$-factorization techniques have been applied to irreducible divisor graphs in integral domains to study $\tau$-finite factorization properties using $\tau$-irreducible divisor graphs in \cite{Mooney3}
\\
\indent When zero-divisors are present, choosing the definition of irreducible and associate becomes a bit more complicated.  In \cite{Valdezleon}, D.D. Anderson and S. Valdez-Leon study several distinct choices for irreducible and associate that various authors have used over the years when looking at factorization in rings with zero-divisors.  In \cite{Axtellidgzd}, the authors choose to use $a$ and $b$ are associates, written $a\sim b$ if $(a)=(b)$.  They say $a$ is irreducible if $a=bc$ then $(a)=(b)$ or $(a)=(c)$.  They then construct irreducible divisor graphs in a natural way to attain some very nice results.
\\
\indent In this paper, we are interested in extending irreducible divisor graphs to work with the many other notions of irreducible and associate which exist in the literature.  This will enable us to extend many theorems to work with a wider range of finite factorization properties that commutative rings with zero-divisors may possess.  Because the definitions for irreducible and associate chosen previously in the literature are the weakest, we find that we are even able to prove several stronger theorems using more powerful notions of irreducible and associate.
\\
\indent Section Two provides the requisite preliminary background information and factorization definitions from the literature regarding rings with zero-divisors as well as much of the definitions from the study of irreducible and zero-divisor graphs.  In Section Three, we define a variety of irreducible divisor graphs of a commutative ring $R$ and examine the relationship between these different graphs.  In Section Four, we provide an example in which we study the various irreducible divisor graphs associated with a particular irreducible element in the ring $\Z \times \Z$.  We are especially interested in comparing these with the irreducible divisor graphs of \cite{Axtellidgd, Coykendall} of irreducible elements in integral domains. In Section Five, we prove several theorems illustrating how irreducible divisor graphs give us another way to characterize various finite factorization properties rings may possess as defined in \cite{Valdezleon}.

\section{Preliminary Definitions}
In this section, we will discuss many of the definitions and ideas which serve as the foundation for this article.  We begin by summarizing many of the factorization definitions from \cite{Valdezleon} in which they study various types of associate relations and irreducible elements.  We then define several finite factorization properties that a ring may possess based upon different choices of irreducible and associate.  We will also provide many of the requisite definitions regarding irreducible divisor graphs especially from \cite{Axtellidgd} and \cite{Coykendall}.  This will allow us to define a number of graphs associated with a particular commutative ring with $1\neq 0$.

\subsection{Factorization Definitions in Rings with Zero-Divisors}\ 
\\
\indent As in \cite{Valdezleon}, we let $a \sim b$ if $(a)=(b)$, $a\approx b$ if there exists $\lambda \in U(R)$ such that $a=\lambda b$, and $a\cong b$ if (1) $a\sim b$ and (2) $a=b=0$ or if $a=rb$ for some $r\in R$ then $r\in U(R)$.  We say $a$ and $b$ are \emph{associates} (resp. \emph{strong associates, very strong associates}) if $a\sim b$ (resp. $a\approx b$, $a \cong b$).  As in \cite{Stickles}, a ring $R$ is said to be \emph{strongly associate} (resp. \emph{very strongly associate}) ring if for any $a,b \in R$, $a\sim b$ implies $a \approx b$ (resp. $a \cong b$).
\\
\indent This leads to several different types of irreducible elements and we refer the reader to \cite[Section 2]{Valdezleon} for more equivalent definitions of the following irreducible elements.  A non-unit $a\in R$ is said to be \emph{irreducible or atomic} if $a=bc$ implies $a\sim b$ or $a\sim c$.  A non-unit $a\in R$ is said to be \emph{strongly irreducible or strongly atomic} if $a=bc$ implies $a \approx b$ or $a\approx c$.  A non-unit $a\in R$ is said to be \emph{m-irreducible or m-atomic} if $a$ is maximal in the set of proper principal ideals of $R$.  A non-unit $a\in R$ is said to be \emph{very strongly irreducible or very strongly atomic} if $a=bc$ implies that $a\cong b$ or $a \cong c$.  We retain the usual definition of a prime element, where $a\in R$ is said to be \emph{prime or p-atomic} if $a\mid bc$ implies $a\mid b$ or $a\mid c$.
\\

\indent We have the following relationship between the various types of irreducibles which is proved in \cite[Theorem 2.13]{Valdezleon}.

\begin{theorem}\label{thm: unrefinable} Let $R$ be a commutative ring with $1$ and let $a \in R$ be a non-unit.  The following diagram illustrates the relationship between the various types of irreducibles $a$ might satisfy.

$$\xymatrix{
\text{very strongly irreducible} \ar@{=>}[r] & \text{m-irreducible} \ar@{=>}[r] & \text{strongly irreducible} \ar@{=>}[r] & \text{irreducible} \\
 & & & \text{prime} \ar@{=>}[u]  }$$
\end{theorem}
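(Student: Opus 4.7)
The plan is to verify each of the four arrows in the diagram directly from the definitions given in \cite{Valdezleon}. Two are essentially immediate: strongly irreducible implies irreducible because $a \approx x$ trivially entails $a \sim x$, and prime implies irreducible because if $a = bc$ with $a$ prime, then $a \mid bc$ yields $a \mid b$ or $a \mid c$, while the reverse divisibility $b, c \mid a$ coming from the factorization itself forces $(a) = (b)$ or $(a) = (c)$.

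For very strongly irreducible implies m-irreducible, suppose $a$ is very strongly irreducible and $(a) \subseteq (b) \subsetneq R$; I want to conclude $(a) = (b)$. Since $a \in (b)$, write $a = bc$ for some $c \in R$. Very strong irreducibility gives $a \cong b$ (which already provides $(a) = (b)$) or $a \cong c$. In the latter subcase, the uniqueness clause of $\cong$ applied to the representation $a = b \cdot c$ forces $b \in U(R)$, contradicting $(b) \subsetneq R$.

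The main obstacle is the implication m-irreducible implies strongly irreducible. Given $a$ m-irreducible with $a = bc$, if either factor is a unit one immediately obtains $a \approx c$ or $a \approx b$, so assume both $b, c$ are non-units. Then the maximality property forces $(a) = (b) = (c)$, and we may write $b = xa$ and $c = ya$; the identity $a = bc = xya^2$ then shows that $e = xya$ is an idempotent satisfying $ae = a$ and $(1-e)a = 0$. This induces a ring decomposition $R = eR \times (1-e)R$ in which $a$ appears as a unit in the first factor (with inverse $xye$) and as zero in the second. The crucial step is to translate m-irreducibility of $a$ through this decomposition---by enumerating which principal ideals of $R$ strictly contain $(a) = (e)$---and conclude that $(1-e)R$ must itself be a field. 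Once this is known, writing $b = (b_1, b_2)$ and $c = (c_1, c_2)$ in the product decomposition, the relation $b_2 c_2 = 0$ in the field $(1-e)R$ forces $b_2 = 0$ or $c_2 = 0$, and in either case one constructs an explicit unit $\lambda \in R$ with $a = \lambda b$ or $a = \lambda c$. This idempotent decomposition is where I expect essentially all the real content to reside; the other three arrows are brief unpackings of the definitions.
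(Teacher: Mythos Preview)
The paper does not prove this result; it is quoted from \cite[Theorem 2.13]{Valdezleon} without argument, so your write-up already goes beyond what the paper offers and must be judged on its own merits.

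The three short implications are handled correctly. One caveat on very strongly irreducible $\Rightarrow$ m-irreducible: your appeal to the ``uniqueness clause'' of $\cong$ tacitly assumes $a\neq 0$, since when $a=c=0$ the disjunct ``$a=c=0$'' in the definition of $\cong$ is already satisfied and nothing forces $b\in U(R)$. In fact the implication genuinely fails for $a=0$ (any domain that is not a field has $0$ very strongly irreducible but not m-irreducible), which is why the original Anderson--Valdes-Leon statement carries a nonzero hypothesis; the paper's very next theorem, also attributed to \cite[Theorem 2.13]{Valdezleon}, records exactly this exceptional behavior of $0$. So your argument is fine once one reads the diagram as applying to nonzero nonunits, as intended in the source.

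Your idempotent-decomposition proof of m-irreducible $\Rightarrow$ strongly irreducible is correct and is essentially the mechanism used in \cite{Valdezleon}. The chain $a=bc$, $(a)=(b)=(c)$, $b=xa$, $c=ya$, $a=xya^{2}$, $e:=xya$ idempotent with $ea=a$, is sound; the identification of $a$ with $(\text{unit},0)$ in $eR\times(1-e)R$, together with maximality of $(a)$ forcing the second factor $(1-e)R$ to be a field, is precisely what is needed to conclude from $b_{2}c_{2}=0$ that one of $b,c$ lies entirely in $eR$ and is therefore a unit multiple of $a$ in $R$. This is where all the content resides, and you have it right.
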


\indent Following A. Bouvier, a ring $R$ is said to be \emph{pr\'esimplifiable} if $x=xy$ implies $x=0$ or $y\in U(R)$ as in \cite{bouvier71, bouvier72a, bouvier72b, bouvier74, bouvier74b}.  When $R$ is pr\'esimplifiable, the various associate relations coincide.  If $R$ is pr\'esimplifiable, then irreducible will imply very strongly irreducible and the various types of irreducible elements will also coincide.  Prime remains strictly stronger than irreducible even in the case of integral domains.  Any integral domain or quasi-local ring is pr\'esimplifiable.  Examples are given in \cite{Valdezleon} and abound in the literature which show that in a general commutative ring setting, each of these types of irreducible elements are distinct. 
\\
\indent This yields the following finite factorization properties that a ring may possess.  Let $\alpha \in \{$atomic, strongly atomic, m-atomic, very strongly atomic$ \}$, $\beta \in \{$associate, strong associate, very strong associate$\}$.  Then $R$ is said to be \emph{$\alpha$} if every non-unit $a\in R$ has a factorization $a=a_1\cdots a_n$ with $a_i$ being $\alpha$ for all $1\leq i \leq n$.  We will call such a factorization a \emph{$\alpha$-factorization}.  We say $R$ satisfies the \emph{ascending chain condition on principal ideals (ACCP)} if for every chain $(a_0) \subseteq (a_1) \subseteq \cdots \subseteq (a_i) \subseteq \cdots$, there exists an $N\in \N$ such that $(a_i)=(a_N)$ for all $i>N$.
\\
\indent A ring $R$ is said to be a \emph{$\alpha$-$\beta$-unique factorization ring ($\alpha$-$\beta$-UFR)} if (1) $R$ is $\alpha$ and (2) for every non-unit $a \in R$ any two $\alpha$ factorizations $a_1 a_1 \cdots a_n = b_1 \cdots b_m$ have $m=n$ and there is a rearrangement so that $a_i$ and $b_i$ are $\beta$.  A ring $R$ is said to be a \emph{$\alpha$-half factorization ring or half factorial ring ($\alpha$-HFR)} if (1) $R$ is $\alpha$ and (2) for every non-unit $a \in R$ any two $\alpha$-factorizations have the same length.  A ring $R$ is said to be a \emph{bounded factorization ring (BFR)} if for every non-unit $a \in R$, there exists a natural number $N(a)$ such that for any factorization $a=a_1 \cdots a_n$, $n \leq N(a)$. A ring $R$ is said to be a \emph{$\beta$-finite factorization ring ($\beta$-FFR)} if for every non-unit $a \in R$ there are only a finite number of factorizations up to rearrangement and $\beta$.  A ring $R$ is said to be a \emph{$\beta$-weak finite factorization ring ($\beta$-WFFR)} if for every non-unit $a \in R$, there are only finitely many $b\in R$ such that $b$ is a divisor of $a$ up to $\beta$.  A ring $R$ is said to be a \emph{$\alpha$-$\beta$-divisor finite ring ($\alpha$-$\beta$-df ring)} if for every non-unit $a \in R$, there are only finitely many $\alpha$ divisors of $a$ up to $\beta$.
\\
\indent We will also find occasion to be interested in the following definitions, where we consider factorizations distinct if they include different ring elements, i.e. not necessarily only up to associate of some type.  A ring $R$ is said to be a \emph{strong-finite factorization ring (strong-FFR)} if for every non-unit $a \in R$ there are only a finite number of factorizations up to rearrangement.  A ring $R$ is said to be a \emph{strong-weak finite factorization ring (strong-WFFR)} if for every non-unit $a \in R$, there are only finitely many divisors of $a$.  A ring $R$ is said to be a \emph{strong-$\alpha$-divisor finite ring (strong-$\alpha$-df ring)} if for every non-unit $a \in R$, there are only finitely many $\alpha$ divisors of $a$.
\\
\indent We have the following relationships between the above properties as proved in \cite{Valdezleon} or by using $\tau$-factorization in \cite{Mooney} with $\tau=R^{\#}\times R^{\#}$ (which is associate preserving and refinable) where we get the usual factorization.  We summarize these relationships by way of the following diagram accompanying \cite[Theorem 4.1]{Mooney}.
$$\xymatrix{
            &        \alpha \text{-HFR} \ar@{=>}[dr]     &             &                  &                 \\
\alpha\text{-} \beta \text{-UFR} \ar@{=>}[ur] \ar@{=>}[r]  & \beta \text{-FFR} \ar@{=>}[r] \ar@{=>}[d]  & \text{BFR} \ar@{=>}[r]& \text{ACCP} \ar@{=>}[r]& \alpha\\
            & \beta\text{-WFFR} \ar@{=>}[d] \ar@{=>}[dl]                  &              &  \text{ACCP} \ar@{=>}[u]               &                  \\
\alpha\  \alpha\text{-}\beta \text{-df ring} \ar@{=>}[r]           & \alpha\text{-}\beta \text{-df ring} &
            }$$

\subsection{Irreducible Divisor Graph Definitions} \label{subsec: graph theory}\ 
\\
\indent We begin with some definitions from M. Axtell, N. Baeth, and J. Stickles in \cite{Axtellidgzd}.  In this paper, the authors let $Irr(R)$ be the set of all irreducible elements in a ring $R$.  
% We need to decide what type of associate relations we want to allow here.  $a\sim b \Leftrightarrow (a) = (b)$ and $a\approx b \Leftrightarrow a=\lambda b$ are both equivalence relations.  These seem better for this.  $a\cong b$ could still be okay, but fails to be reflexive.  $\approx$ works well with $\tau$-U-factorization.
Then $\overline{Irr}(R)$ is a (pre-chosen) set of coset representatives of the collection $\{a U(D) \mid a \in Irr(D)\}$.  Let $x\in D^{\#}$ have a factorization into irreducibles.  
The irreducible divisor graph of $x \in D^{\#}$, will be the graph $G(x)=(V,E)$ where $V=\{a\in \overline{Irr}(D) \mid \ a \vert x\}$, i.e. the set of irreducible divisors of $x$ up to associate.  Given $a_1, a_2 \in \overline{Irr}(D)$, $a_1a_2 \in E$ if and only if $a_1a_2 \mid x$.  Furthermore, $n-1$ loops will be attached to $a$ if $a^n \mid x$.  If arbitrarily many powers of $a$ divide $x$, we allow an infinite number of loops.  They define the \emph{reduced irreducible divisor graph} of $x$ to be the subgraph of $G(x)$ which is formed by deleting all the loops and denote it as $\overline{G}(x)$.  A \emph{clique} will refer to a simple (no loops or multiple edges), complete (all vertices are pairwise adjacent) graph.  A clique on $n \in \N$ vertices will be denoted $K_n$.  We will call a graph $G$ a \emph{pseudo-clique} if $G$ is a complete graph having some number of loops (possibly zero).  This means a clique is a pseudo-clique and the reduced graph of a pseudo-clique is a clique.
\\
\indent Let $G$ be a graph, possibly with loops.  Let $a\in V(G)$, then we have two ways of counting the degree of this vertex.  We define \emph{deg}$(a):= \left|\{a_1 \in V(G) \mid a_1 \neq a, a_1a \in E(G)\}\right|$, i.e. the number of distinct vertices adjacent to $a$.  Suppose a vertex $a$ has $n$ loops, then we define \emph{degl}$(a):=n+deg(a)$, the sum of the degree and the number of loops.  Given $a,b \in V(G)$, we define $d(a,b)$ to be the shortest path between $a$ and $b$.  If no such path exists, i.e. $a$ and $b$ are in disconnected components of $G$, or the shortest path is infinite, then we say $d(a,b)= \infty$.  We define Diam$(G):=$sup$(\{d(a,b) \mid a,b \in V(G) \})$.  
\\
\indent Two other numbers that we will be interested for their relationship with lengths of factorizations will be the \emph{clique number} and what we call the \emph{pseudo-clique number}.  The \emph{clique number}, written $\omega(G)$, is the cardinality of the vertex set of the largest complete subgraph contained in $G$.  If for all $n \geq 2$, there is a subgraph isomorphic to $K_n$, the complete graph on $n$ vertices, then we say $\omega(G)=\infty$.  We define the \emph{pseudo-clique number} of a pseudo-clique to be the cardinality of the edge set, including loops, in a pseudo-clique.  The \emph{pseudo-clique number} of an arbitrary graph $G$, written $\Omega(G)$, will be the cardinality of the edge set of the largest pseudo-clique appearing as a subgraph of $G$.  If there are pseudo-cliques with arbitrarily many edges or loops, we say $\Omega(G)=\infty.$
\\
\indent A major obstacle to studying factorization properties in rings with zero-divisors is that there are several choices to make for associate relations as well as several choices of irreducible elements.  In \cite{Axtellidgzd}, the authors choose one particular type of irreducible and one choice for associate.  To make our results as general as possible, we will consider several possible irreducible graphs which use various choices for associate relations as well as different types of irreducible elements.  This choice makes matters somewhat more complicated; however, it will allow us to prove several equivalences with the various choices of finite factorization properties that rings may possess from \cite{Valdezleon} and elsewhere in the literature.

\section{Irreducible Divisor Graph Definitions and Relationships}
Let $R$ be a commutative ring with $1$, let $\alpha \in \{ \emptyset, $ prime, irreducible, strongly irreducible, m-irreducible, very strongly irreducible $\}$ and let $\beta \in \{ \emptyset, $ associate, strong associate, very strong associate $ \}$.  The notation when $\alpha$ or $\beta$ is $\emptyset$ is to indicate a blank space in the following irreducible divisor graph notation and should make sense in context.  
\\
%\indent Given a non-unit $x\in R$, we let 
We let $ A_\alpha(R)= \{ a \in R-U(R) \mid a \text{ is } \alpha \}.$  When $\alpha = \emptyset$, $A_\emptyset (R)=A(R)=R-U(R)$.  We will let $A_\alpha^\beta(R)$ be the set where we select a representative of $A_\alpha$ up to $\beta$.  If $\beta= \emptyset$, then we do not eliminate any elements from $A_\alpha(R)$.  That is, each element is represented on its own and $A_\alpha^\emptyset(R)=A_\alpha(R)$.  If $\alpha = \beta = \emptyset$, $A_\emptyset^\emptyset(R) = A(R) = R-U(R)$.
\\
\indent Now, let $x\in R$ be a non-unit.  We are now ready to define $G_\alpha^\beta(x)$, the $\alpha$-$\beta$-divisor graph of $x$.  We have the vertex set defined by $V(G_\alpha^\beta(x))=\{a\in A_\alpha^\beta(R) \vert a\mid x \}$.  The edge set is given by $ab \in E(G_\alpha^\beta(x))$ if and only if $a,b \in V(G_\alpha^\beta(x))$ and there is a $\alpha$-factorization of the form $x=ab a_1 \cdots a_n$ (if $\alpha = \emptyset$, this need only be an ordinary factorization).  Furthermore, $n-1$ loops will be attached to the vertex corresponding to $a$ if there is a $\alpha$-factorization of the form $x=a^n a_1 \cdots a_n$.  We allow for the possibility for an infinite number of loops if arbitrarily large powers of $a$ divide $x$.  
\\
\begin{lemma}\label{lem: irreducible graphs by atom} Let $R$ be a commutative ring and let $x\in R$ be a non-unit.  We fix a $\beta \in \{ \emptyset, $ associate, strong associate, very strong associate $ \}$.  We consider the following possible $\alpha$-$\beta$ divisor graphs of $x$.
\begin{enumerate}
\item $G_\emptyset^\beta(x)$
\item $G_\text{prime}^\beta(x)$
\item $G_\text{irred.}^\beta(x)$
\item $G_\text{s. irred.}^\beta(x)$
\item $G_\text{m-irred.}^\beta(x)$
\item $G_\text{v.s. irred.}^\beta(x)$
\end{enumerate}
\indent Then we have the following inclusions between the graphs, i.e. the graph appears as a subgraph.

$$\xymatrix{G_\text{v.s. irred.}^\beta(x) \ar@{^{(}->}[r] & G_\text{m-irred.}^\beta(x)\ar@{^{(}->}[r] & G_\text{s. irred.}^\beta(x) \ar@{^{(}->}[r] & G_\text{irred.}^\beta(x)\ar@{^{(}->}[r]& G_\emptyset^\beta(x) \\
                                              &                               &                     &  G_\text{prime}^\beta(x) \ar@{^{(}->}[u]  &     }$$																							
																							
\end{lemma}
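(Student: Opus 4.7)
The plan is to reduce everything to the implication diagram of Theorem \ref{thm: unrefinable}, which says that the types of irreducibles form a chain
\[
\text{v.s.\ irred.} \Rightarrow \text{m-irred.} \Rightarrow \text{s.\ irred.} \Rightarrow \text{irred.},
\]
with prime also implying irreducible. Fix $\beta$ throughout. For each implication $\alpha_1 \Rightarrow \alpha_2$ I would build an explicit inclusion map $\iota : G_{\alpha_1}^\beta(x) \hookrightarrow G_{\alpha_2}^\beta(x)$ by sending a vertex $a$ of the smaller graph to the chosen $\beta$-representative of $a$ inside $A_{\alpha_2}^\beta(R)$; this is well-defined because $\alpha_1 \Rightarrow \alpha_2$ guarantees $a \in A_{\alpha_2}(R)$, and of course $a \mid x$ is preserved. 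The inclusion into $G_\emptyset^\beta(x)$ is handled the same way, since any non-unit is in $A_\emptyset(R) = R - U(R)$.

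Next I would verify that $\iota$ carries edges to edges and loops to loops. This is where the fact that the edge condition uses an $\alpha$-\emph{factorization} matters: an edge $ab$ in $G_{\alpha_1}^\beta(x)$ is witnessed by a factorization $x = a b a_1 \cdots a_n$ in which every factor $a,b,a_1,\ldots,a_n$ is $\alpha_1$. By Theorem \ref{thm: unrefinable} every such factor is also $\alpha_2$, so the same factorization is an $\alpha_2$-factorization and hence witnesses the edge $\iota(a)\iota(b)$ in $G_{\alpha_2}^\beta(x)$. An identical argument applied to $x = a^n a_1 \cdots a_n$ transports each loop. For the terminal inclusion into $G_\emptyset^\beta(x)$, one only needs an ordinary factorization to record an edge or loop, and any $\alpha$-factorization is in particular an ordinary factorization, so the same witness works. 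The prime-to-irreducible inclusion is just the special case $\alpha_1 = \text{prime}$, $\alpha_2 = \text{irreducible}$ of the general argument, invoking the vertical arrow in Theorem \ref{thm: unrefinable}.

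The only mildly delicate point, and the one I would be most careful about, is the bookkeeping caused by the $\beta$-representatives: the ``vertex'' $a$ in the smaller graph and the vertex $\iota(a)$ in the larger graph may literally be different ring elements that happen to be $\beta$-equivalent. So the inclusion is as subgraphs up to isomorphism rather than on the nose, and I would note explicitly that replacing $a$ by a $\beta$-associate $a'$ preserves both the divisibility condition $a' \mid x$ and the existence of an $\alpha_2$-factorization of the required shape (one just substitutes $a'$ for $a$, using that $\beta$-equivalence in every case defined in the preliminaries implies $\sim$, i.e.\ $(a)=(a')$, so the factorizations transfer). With that compatibility checked once, all five inclusions follow uniformly from Theorem \ref{thm: unrefinable}, and the diagram commutes by construction.
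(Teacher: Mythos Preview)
Your argument is essentially the same as the paper's: both reduce the vertex inclusions and the edge/loop inclusions to Theorem~\ref{thm: unrefinable}, using that an $\alpha_1$-factorization is automatically an $\alpha_2$-factorization whenever $\alpha_1 \Rightarrow \alpha_2$. The only difference is that the paper sidesteps your ``mildly delicate point'' entirely by declaring up front that the $\beta$-representatives are fixed once and for all (independently of $\alpha$), so the vertex sets are literal subsets and no $\iota$ is needed; your extra bookkeeping is therefore unnecessary in that framework, and in fact your claim that ``factorizations transfer'' under replacement by an arbitrary $\beta$-associate would need more care when $\beta=$ associate (the absorbing factor could lose its irreducibility type), so it is cleaner to adopt the paper's convention and drop that paragraph.
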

\begin{proof}Once we have fixed the representative of the associate classes up to $\beta$, we may apply Theorem \ref{thm: unrefinable} to see that the vertex set containments agree.  All the very strongly irreducible elements are m-irreducible which are strongly irreducible which are irreducible giving us the horizontal inclusions.  Lastly, we know that the prime elements of a ring are among the irreducible elements, which demonstrates the vertical inclusion.  Hence the vertex sets satisfy the relationships described in the diagram.
\\
\indent We now let $\alpha$ be the appropriate type of irreducible or prime in the graph we wish to show is included and let $\alpha'$ be the type of irreducible or prime we wish to show contains the given edge.  Let $a_1a_2\in E(G_\alpha^\beta(x))$. Then there is a factorization of the form $x=a_1 \cdots a_n$ where $a_i$ is $\alpha$ for each $1\leq i \leq n$.  If $\alpha_i$ is $\alpha$, then it is also $\alpha'$, so this factorization is also a $\alpha'$-factorization by Theorem \ref{thm: unrefinable}.  This proves that $a_1a_2\in E(G_{\alpha'}^\beta(x))$ as desired.
% This is immediate from definitions and \cite[Theorem 3.9]{Mooney}
\end{proof}
\begin{lemma}\label{lem: irreducible graphs by associate} Let $R$ be a commutative ring and let $x\in R$ be a non-unit.  We fix a $\alpha \in \{ \emptyset, $ prime, irreducible, strongly irreducible, m-irreducible, unrefinably irreducible, very strongly irreducible $ \}$.  We consider the following possible $\alpha$-$\beta$ divisor graphs of $x$.
\begin{enumerate}
\item $G^\text{associate}_\alpha(x)$
\item $G^\text{s. associate}_\alpha(x)$
\item $G^\text{v.s. associate}_\alpha(x)$
\item $G^\emptyset_\alpha(x)$
\end{enumerate}
\indent We use the symbol $\xymatrix{ G_1 \ar@{^{(}->}^{\sim}[r] & G_2}$ to denote that $G_1$ is a quotient graph of $G_2$, where vertices in $G_2$ have been identified with each other and consolidated into one vertex in $G_1$.  Any edges between identified vertices from $G_2$ are now loops in $G_1$. Then we have the following inclusions between the graphs.
$$\xymatrix{ G^\text{associate}_\alpha(x) \ar@{^{(}->}^{\sim}[r] & G^\text{s.assoc.}_\alpha(x) \ar@{^{(}->}^{\sim}[r] & G^\text{v.s.assoc.}_\alpha(x) \ar@{^{(}->}^{\sim}[r] & G^\emptyset_\alpha(x) }$$
\end{lemma}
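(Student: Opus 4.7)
The plan is to construct a graph quotient for each successive pair in the chain, leveraging the fact that the three associate relations themselves nest as refinements: $a \cong b \Rightarrow a \approx b \Rightarrow a \sim b$ (from \cite{Valdezleon}). Consequently each $\cong$-equivalence class sits inside an $\approx$-class, which sits inside a $\sim$-class, while the ``$\emptyset$'' graph keeps every element on its own.

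First I would choose the representatives compatibly, so that a representative for the coarser relation is chosen from among those for the finer relation; the vertex-level map $\pi$ sending a finer representative to the coarser representative of its enclosing class is then a well-defined surjection whose fibers are exactly the sets of vertices being identified. This immediately yields the quotient at the level of vertex sets and tells us which pairs of finer vertices must collapse to a single vertex (and therefore convert any edge between them into a loop) in the coarser graph.

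Next I would verify the edge correspondence. If $a_1 a_2 \in E(G^\beta_\alpha(x))$ is witnessed by an $\alpha$-factorization $x = a_1 a_2 c_1 \cdots c_n$, then in the coarser graph the corresponding edge should run between $\pi(a_1)$ and $\pi(a_2)$, or appear as a loop at $\pi(a_1)$ when $\pi(a_1) = \pi(a_2)$. For the strong and very-strong cases each $\pi(a_i)$ differs from $a_i$ by a unit, and absorbing those units into one of the $c_j$ factors produces a new $\alpha$-factorization whose adjusted factor remains $\alpha$ by Theorem \ref{thm: unrefinable}, witnessing the required edge or loop. The converse direction lifts trivially once compatible representatives are in place, and existing loops transfer by the same argument, accumulating under identification, exactly as the quotient-graph definition demands.

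The main obstacle is the step from $G^\text{s.assoc.}_\alpha(x)$ to $G^\text{associate}_\alpha(x)$, because $\sim$ identifies elements that may not differ by a unit, making the ``absorb the scalar'' maneuver more delicate. The resolution uses $(a)=(\pi(a))$: any witnessing factorization can be translated into one expressed through $\pi(a)$ by swapping in the generator via the principal-ideal relation, and the types of irreducibility under consideration are preserved under $\sim$ by the discussion in \cite[Section 2]{Valdezleon}. The opposite end of the chain, from $G^\emptyset_\alpha(x)$ to $G^\text{v.s.assoc.}_\alpha(x)$, is the cleanest, since $\emptyset$ imposes no identification at all and the very-strong associate relation behaves most rigidly with respect to factorizations.
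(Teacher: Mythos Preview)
Your approach is essentially the same as the paper's: both arguments rest on the chain of inclusions
\[
\emptyset \subseteq \{(a,b)\mid a\cong b\} \subseteq \{(a,b)\mid a\approx b\} \subseteq \{(a,b)\mid a\sim b\}
\]
and conclude that coarsening the associate relation identifies more vertices, with edges between identified vertices becoming loops. The paper's proof is far terser than yours---it simply writes down this chain, gives one illustrative example of an edge collapsing to a loop, and declares the remaining cases analogous---whereas you construct the quotient map $\pi$ explicitly via compatible representatives and correctly isolate the passage from strong associate to associate as the only step where the unit-absorption trick is unavailable; the paper does not single out this step at all.
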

\begin{proof} This is due to the fact that
$$\emptyset \subseteq \{(a,b)\in R^{\#} \times R^{\#} \mid a\cong b\} \subseteq \{(a,b)\in R^{\#} \times R^{\#} \mid a\approx b\} \subseteq \{(a,b)\in R^{\#} \times R^{\#} \mid a\sim b\}.$$  As we go from right to left, we see more vertices get identified together as we move from stronger forms of associate to a weaker form of associate.  To see how edges could become loops, consider $\alpha$ elements, $b,c \in R$ such that $bc \mid x$ where $b$ and $c$ are associates, but not strong associates.  Then $bc$ is a simple edge in $G^\text{s. associate}_\alpha(x)$, but it yields a loop in $G^\text{associate}_\alpha(x)$.  Analogous arguments show the rest of the inclusions.
\end{proof}
\begin{corollary}\label{cor: big one} Let $R$ be a commutative ring.  For a given non-unit $x\in R$, we have the following diagram which demonstrates the relations between the various irreducible divisor graphs of $x$.

$$\xymatrix{ G^\text{assoc.}_\text{v.s. irred.}(x) \ar@{^{(}->}^{\sim}[r]\ar@{^{(}->}[d] & G^\text{s. assoc.}_\text{v.s. irred.}(x) \ar@{^{(}->}^{\sim}[r] \ar@{^{(}->}[d]& G^\text{v.s. assoc.}_\text{v.s. irred.}(x) \ar@{^{(}->}^{\sim}[r] \ar@{^{(}->}[d] & G^\emptyset_\text{v.s. irred.}(x)\ar@{^{(}->}[d]\\
G^\text{assoc.}_\text{m-irred.}(x) \ar@{^{(}->}^{\sim}[r]\ar@{^{(}->}[d] & G^\text{s. assoc.}_\text{m-irred.}(x) \ar@{^{(}->}^{\sim}[r] \ar@{^{(}->}[d]& G^\text{v.s.assoc.}_\text{m-irred.}(x) \ar@{^{(}->}^{\sim}[r] \ar@{^{(}->}[d] & G^\emptyset_\text{m-irred.}(x)\ar@{^{(}->}[d]\\
G^\text{assoc.}_\text{s. irred.}(x) \ar@{^{(}->}^{\sim}[r]\ar@{^{(}->}[d] & G^\text{s. assoc.}_\text{s. irred.}(x) \ar@{^{(}->}^{\sim}[r] \ar@{^{(}->}[d]& G^\text{v.s. assoc.}_\text{s. irred.}(x) \ar@{^{(}->}^{\sim}[r] \ar@{^{(}->}[d] & G^\emptyset_\text{s. irred.}(x)\ar@{^{(}->}[d]\\
G^\text{assoc.}_\text{irred.}(x) \ar@{^{(}->}^{\sim}[r]\ar@{^{(}->}[d] & G^\text{s. assoc.}_\text{irred.}(x) \ar@{^{(}->}^{\sim}[r] \ar@{^{(}->}[d]& G^\text{v.s. assoc.}_\text{irred.}(x) \ar@{^{(}->}^{\sim}[r] \ar@{^{(}->}[d] & G^\emptyset_\text{irred.}(x)\ar@{^{(}->}[d]\\
G^\text{assoc.}_\emptyset(x) \ar@{^{(}->}^{\sim}[r] & G^\text{s. assoc.}_\emptyset(x) \ar@{^{(}->}^{\sim}[r] & G^\text{v.s. assoc.}_\emptyset(x) \ar@{^{(}->}^{\sim}[r] & G^\emptyset_\emptyset(x)\\
G^\text{assoc.}_\text{prime}(x) \ar@{^{(}->}[u] \ar@{^{(}->}^{\sim}[r]\ar@/^3pc/[uu]  & G^\text{s. assoc.}_\text{prime}(x) \ar@{^{(}->}[u] \ar@{^{(}->}^{\sim}[r] \ar@/^3pc/[uu]& G^\text{v.s. assoc.}_\text{prime}(x) \ar@{^{(}->}[u] \ar@{^{(}->}^{\sim}[r] \ar@/_3pc/[uu] & G^\emptyset_\text{prime}(x) \ar@{^{(}->}[u] \ar@/_3pc/[uu]
}$$

\end{corollary}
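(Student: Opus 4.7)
The plan is to assemble the big diagram by invoking the two preceding lemmas for the individual arrows and then verifying compatibility. The vertical inclusions (those without the $\sim$ decoration) come from Lemma \ref{lem: irreducible graphs by atom}: fix a column, i.e.\ fix the associate relation $\beta \in \{\emptyset,\text{assoc.},\text{s. assoc.},\text{v.s. assoc.}\}$, and apply the lemma to obtain the chain of subgraph inclusions among $G^{\beta}_{\text{v.s. irred.}}(x) \hookrightarrow G^{\beta}_{\text{m-irred.}}(x) \hookrightarrow G^{\beta}_{\text{s. irred.}}(x) \hookrightarrow G^{\beta}_{\text{irred.}}(x) \hookrightarrow G^{\beta}_{\emptyset}(x)$ together with the side inclusion $G^{\beta}_{\text{prime}}(x) \hookrightarrow G^{\beta}_{\text{irred.}}(x)$. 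The curved arrows from the prime row up to the irreducible row are simply obtained by composing this side inclusion with the vertical chain (and noting that a prime element is in particular irreducible, hence strongly, m-, or very strongly irreducible exactly under the hypotheses captured by the vertex/edge containments of Lemma \ref{lem: irreducible graphs by atom}).

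Similarly, the horizontal quotient maps $\hookrightarrow^{\sim}$ come from Lemma \ref{lem: irreducible graphs by associate}: fix a row, i.e.\ fix the irreducibility type $\alpha$, and apply the lemma to obtain $G^{\text{assoc.}}_\alpha(x) \hookrightarrow^{\sim} G^{\text{s. assoc.}}_\alpha(x) \hookrightarrow^{\sim} G^{\text{v.s. assoc.}}_\alpha(x) \hookrightarrow^{\sim} G^{\emptyset}_\alpha(x)$. Here as we move left the associate relation weakens, so more vertices are identified and some edges collapse to loops, exactly as described in Lemma \ref{lem: irreducible graphs by associate}.

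The only real content beyond copying the two lemmas is to check compatibility, i.e.\ that the diagram commutes: the vertical inclusion and the horizontal quotient can be performed in either order. This is straightforward from the definitions. Given a fixed choice of coset representatives of $A_\alpha(R)$ modulo $\beta$, the vertex set $V(G^{\beta}_\alpha(x))$ consists precisely of the chosen representatives dividing $x$, while an edge $ab$ or loop at $a$ records the existence of an $\alpha$-factorization of the prescribed shape. Strengthening $\alpha$ restricts which factorizations count (so restricts vertices and edges), and strengthening $\beta$ picks out more representatives (so refines the quotient); these two operations clearly commute at the level of vertices and of edges, and the loop bookkeeping described in Lemma \ref{lem: irreducible graphs by associate} is respected in either order.

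The one mildly delicate point, and the piece I would write out most carefully, is the interaction of the horizontal quotient with the loop convention: when two associate (but not strongly associate) $\alpha$-divisors $b \not= c$ are identified in passing from $G^{\text{s. assoc.}}_\alpha(x)$ to $G^{\text{assoc.}}_\alpha(x)$, any edge $bc$ coming from a factorization $x = bc\,a_1\cdots a_n$ must be reinterpreted as a loop at the common image vertex, and the count of loops must be adjusted accordingly. Once one observes that this bookkeeping is intrinsic to the quotient operation defined in Lemma \ref{lem: irreducible graphs by associate} and is independent of which $\alpha$ is being used, commutativity of each square in the diagram follows, and the corollary is proved by simply tiling the diagram out of these squares.
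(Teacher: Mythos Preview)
Your approach is essentially the same as the paper's: the paper's proof is a single sentence stating that this is an immediate corollary of Lemma~\ref{lem: irreducible graphs by atom} and Lemma~\ref{lem: irreducible graphs by associate}, which is exactly what you do for the vertical and horizontal arrows respectively. Your additional verification of commutativity and loop bookkeeping goes beyond what the paper proves (the corollary only asserts the individual arrows, not that the squares commute), so that portion is extra work not required by the statement; also, your parenthetical about prime implying the stronger irreducibility notions is misworded---the curved arrows are simply the direct inclusion $G^\beta_{\text{prime}}(x)\hookrightarrow G^\beta_{\text{irred.}}(x)$ already supplied by Lemma~\ref{lem: irreducible graphs by atom}, with no composition needed.
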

\begin{proof}This is an immediate corollary of Lemma \ref{lem: irreducible graphs by atom} and Lemma \ref{lem: irreducible graphs by associate}.
\end{proof}
\indent The following theorems indicate certain situations in which many of the associate relations and irreducibles would coincide. 
\begin{theorem}(\cite[Theorem 2.13]{Valdezleon}) $0$ is m-irreducible if and only if $R$ is a field.  $R$ is a domain if and only if $0$ is irreducible if and only if $0$ is prime if and only if $0$ is strongly irreducible if and only if $0$ is very strongly irreducible.
\end{theorem}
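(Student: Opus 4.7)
The plan is to reduce each notion of irreducibility or primality \emph{at the element} $0$ to an elementary condition on $R$, by first unpacking what the associate relations mean when one side is $0$.

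The key preliminary observation is that $0 \sim a$, $0 \approx a$, $0 \cong a$, and $0 \mid a$ are all equivalent to $a=0$. Indeed $(0)=(a)$ forces $a=0$, so $0\sim a \iff a=0$; then $0\approx a$ requires $a=\lambda\cdot 0 = 0$ for some unit $\lambda$, and $0\cong a$ requires $0\sim a$ (so $a=0$), after which the second clause of $\cong$ is satisfied via the case $a=b=0$. With these identities in hand, I would handle the two halves of the theorem in turn.

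For the m-irreducible statement: $0$ is m-irreducible iff $(0)$ is maximal among proper principal ideals of $R$. Since every proper principal ideal $(a)$ trivially contains $(0)$, maximality forces $(a)=(0)$ for every non-unit $a$; hence the only non-unit of $R$ is $0$, i.e., $R$ is a field. Conversely, in a field the only proper principal ideal is $(0)$ itself, so maximality holds vacuously. For the second chain, each of the four conditions (prime, irreducible, strongly irreducible, very strongly irreducible) applied to $0$, after invoking the preliminary observation on the conclusion, reads as ``$bc=0$ (resp.\ $0\mid bc$) implies $b=0$ or $c=0$,'' which is precisely the definition of an integral domain. Conversely, if $R$ is a domain and $0=bc$, then without loss of generality $b=0$, whereupon $0\sim b$, $0\approx b$, $0\cong b$, and $0\mid b$ all hold, delivering each of the four irreducibility properties at $0$.

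The main obstacle is essentially bookkeeping: one must be careful with the slightly awkward auxiliary clause in the definition of $\cong$, and with distinguishing the ``divides'' formulation used for primality from the equational formulation used for the other three notions. Once the collapse $0\sim a \iff 0\approx a \iff 0\cong a \iff 0\mid a \iff a=0$ is established, all the remaining equivalences follow immediately by direct substitution into the respective definitions of each type of irreducible element.
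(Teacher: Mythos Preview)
Your argument is correct. The reduction via the preliminary observation that $0\sim a$, $0\approx a$, $0\cong a$, and $0\mid a$ all collapse to $a=0$ is exactly the right maneuver, and once this is in place each clause of the theorem unpacks directly from the definitions as you describe.

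As for comparison with the paper: there is nothing to compare. The paper does not supply its own proof of this statement; it merely quotes it as \cite[Theorem 2.13]{Valdezleon} and uses it as background. So your write-up is not an alternative to the paper's argument but rather a self-contained verification of a result the paper takes on citation.
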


\begin{theorem}\label{thm: presimplifiable} Let $R$ be a commutative ring with $1$.  If $R$ is pr\'esimplifiable, $x\in R$ is a non-zero, non-unit, $\alpha \in \{$ irreducible, strongly irreducible, m-irreducible, very strongly irreducible $\}$ and $\beta \in \{$ associate, strong associate, very strong associate $\}$, $G_\alpha^{\beta}$(x) is the same for any choice of $\alpha$ and $\beta$ provided the same choice of $\beta$ representative is selected.
\end{theorem}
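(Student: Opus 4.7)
The plan is to reduce the claim to the two facts recalled in the paragraph just after Theorem \ref{thm: unrefinable}: in a présimplifiable ring, the relations $\sim$, $\approx$, and $\cong$ all have the same equivalence classes, and all four of irreducible, strongly irreducible, m-irreducible, and very strongly irreducible define the same subset of non-units of $R$. Everything else in the theorem is just a matter of unpacking how $G_\alpha^\beta(x)$ is built from these ingredients.

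First I would check equality of the vertex sets. Because the three associate relations have identical equivalence classes in a présimplifiable ring, one fixed system of representatives simultaneously serves as a system of $\beta$-class representatives for every admissible $\beta$; this is precisely the content of the hypothesis that the same $\beta$-representative be selected for each graph. Combined with the coincidence of the four irreducibility notions, this shows that $A_\alpha^\beta(R)$, and hence $V(G_\alpha^\beta(x)) = \{a \in A_\alpha^\beta(R) \mid a \mid x\}$, is independent of the choice of $\alpha$ and $\beta$ in the allowed range.

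Next I would address the edges and loops. By definition, $ab$ is an edge of $G_\alpha^\beta(x)$ exactly when there is an $\alpha$-factorization $x = ab\, a_1 \cdots a_n$, and the loop count at $a$ is read off similarly from factorizations $x = a^n a_1 \cdots a_m$. Since every factor appearing in such a factorization must be an $\alpha$-element, and since the set of $\alpha$-elements is the same for all four permissible $\alpha$, the set of $\alpha$-factorizations of $x$ is independent of $\alpha$; thus the edge sets and loop multiplicities all agree. Equivalently, one may invoke Corollary \ref{cor: big one}: restricted to the $\alpha$ and $\beta$ listed in the theorem, every arrow in that diagram becomes an inclusion of graphs with matching vertex sets and matching factorization conditions, so each inclusion collapses to an equality.

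There is no substantive obstacle here; the only point requiring a moment of care is the bookkeeping around the phrase \emph{the same $\beta$-representative}, which is precisely what upgrades an isomorphism of the three associate-indexed graphs to literal equality. Once this is pinned down, the theorem is essentially a packaging of the fact that présimplifiability collapses every distinction the paper has been tracking between irreducibility and associate notions.
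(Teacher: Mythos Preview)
Your proposal is correct and follows essentially the same approach as the paper: both arguments simply invoke the fact, recalled after Theorem~\ref{thm: unrefinable}, that in a pr\'esimplifiable ring the three associate relations and the four irreducibility notions all coincide, and observe that the graphs therefore agree. Your write-up is more explicit than the paper's---you separately verify the vertex sets, edges, and loops and note the role of the ``same $\beta$-representative'' hypothesis---but the underlying idea is identical.
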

\begin{proof} As discussed in the preliminaries, in a pr\'esimplifiable ring $a \sim b$ if and only if $a \approx b$ if and only if $a\cong b$.  This implies that $x\in R^{\#}$ is atomic if and only if $x$ is strongly atomic if and only if $x$ is m-atomic if and only if $x$ is very strongly atomic.  This shows the choices of irreducible and associate all coincide, so their respective irreducible divisor graphs will also coincide.
\end{proof}

\begin{remark} The reader may be wondering about why prime no longer fits into the above theorem.  Even in domains, which are certainly pr\'esimplifiable, there are examples of irreducible elements which are not prime.  For instance $9 \in \Z[\sqrt{-5}]$ has irreducible factorizations $9=3\cdot 3= (2 + \sqrt{-5})(2 - \sqrt{-5})$; however, these are not prime factorizations.  Because of this, we focus on irreducible elements and irreducible factorizations throughout the rest of the paper.
\end{remark}

\section{Irreducible Divisor Graphs and Irreducible Elements}
\indent An interesting thing to note was that in the domain case, if $x\in D^{\#}$ is irreducible, then $G(x)\cong K_1$, a single vertex.  In an integral domain, the only factorizations of an irreducible element $x$ are trivial factorizations of the form $x=\lambda (\lambda^{-1}x)$.  This is not necessarily the case when there are zero-divisors present.  With this in mind, we present following example and use this to motivate the investigation of this more thoroughly throughout the rest of the section.

\begin{example}  Let $R= \Z \times \Z$.  
\\
\indent We consider the element $(1,0)$ and consider what possible factorizations could look like.  If $(1,0)=(a_1,b_1)(a_2,b_2)\cdots (a_n,b_n)$, then it must be the case that $a_1 a_2 \cdots a_n=1$ and $b_1b_2\cdots b_n=0$.  The fact that $a_1a_2 \cdots a_n=1$ implies that the first coordinate of any factor in a factorization of $(1,0)$ must be a unit.  The fact that $\Z$ is an integral domain and $b_1 b_2 \cdots b_n=0$ implies that in any factorization of $(1,0)$ at least one factor must have a zero in the second coordinate.  Thus any factorization of $(1,0)$ must have $(1,0)$ or $(-1,0)$ occurring somewhere in the factorization.  This demonstrates that $(1,0)$ is both irreducible and strongly irreducible.  
\\
\indent On the other hand, $(1,2) \mid (1,0)$ as seen by $(1,0)=(1,2)(1,0)$; however, it is clear that $(1,0)$ cannot divide $(1,2)$ due to the second coordinate being non-zero.  This demonstrates that $(1,0) \subsetneq (1,2)$ which in turn shows that $(1,0)$ is not m-atomic.  Moreover $(1,2)$ is not a unit and hence $(1,0)=(1,2)(1,0)$ demonstrates $(1,0)$ is not very strongly atomic either.  
\\
\indent We are now interested in what other types of irreducible elements divide $(1,0)$.  A non-zero, non-unit element in $\Z \times \Z$ is irreducible if and only if it is of the form $(\pm 1, p)$ or $(p, \pm 1)$ with $p$ an irreducible element of $\Z$.  In a UFD like $\Z$, non-zero prime elements and irreducible elements coincide.  We also note that in a domain $0$ is irreducible since there are no non-trivial zero-divisors.  
\\
\indent Elements of the form $(1, p)$ for $p$, a non-zero irreducible, are regular elements and therefore all of the notions of irreducible will coincide.  Thus $(1,p)$ for $p$ a non-zero irreducible is irreducible, strongly irreducible, m-irreducible, and very strongly irreducible.  We will go ahead and choose the positive values of $p$ as our equivalence class representatives.  Hence all irreducible, and strongly irreducible factorizations of $(1,0)$ up to associate (and strongly associate since $\Z \times \Z$ is a strongly associate ring) are of the form 
$$(1,0)=(1,0)^{i_0}(1,p_1)^{i_1}(1,p_2)^{i_2}\cdots(1,p_n)^{i_n}$$
where $p_i$ is a non-zero irreducible element for each $1 \leq i \leq n$.  Hence when studying the irreducible and strongly irreducible divisor graph of $(1,0)$ up to associate and strong associates, we get a complete graph on an infinite number of vertices generated by elements $\{(1,p)\mid \text{ p is non-negative and irreducible in } \Z \}$.  Moreover, each vertex has an infinite number of loops.
\\
\indent When considering factorizations up to very strongly associate, we must be slightly careful because $(1,0) \not \cong (-1,0)$, so we actually will need to consider atomic and strongly atomic factorizations of the form
$$(1,0)=(-1,0)^{i'_{0}}(1,0)^{i_0}(1,p_1)^{i_1}(1,p_2)^{i_2}\cdots(1,p_n)^{i_n}$$
where $p_i$ is a non-zero irreducible element for each $1 \leq i \leq n$.  Hence we get a complete graph on an infinite number of vertices generated by elements 
$$\{(1,p)\mid \text{ p is non-negative and irreducible in } \Z \}\cup \{(-1,0)\}.$$ 
Again each vertex will have an infinite number of loops.
\\

\indent This shows that for $\alpha\in \{$ irreducible, strongly irreducible $\}$ and for $\beta \in \{$ associate, strongly associate $\}$, we have the following for $G_\alpha^\beta\left( (1,0)\right)$.
\begin{figure}[H]
	 \centering
		 \includegraphics[scale=.7]{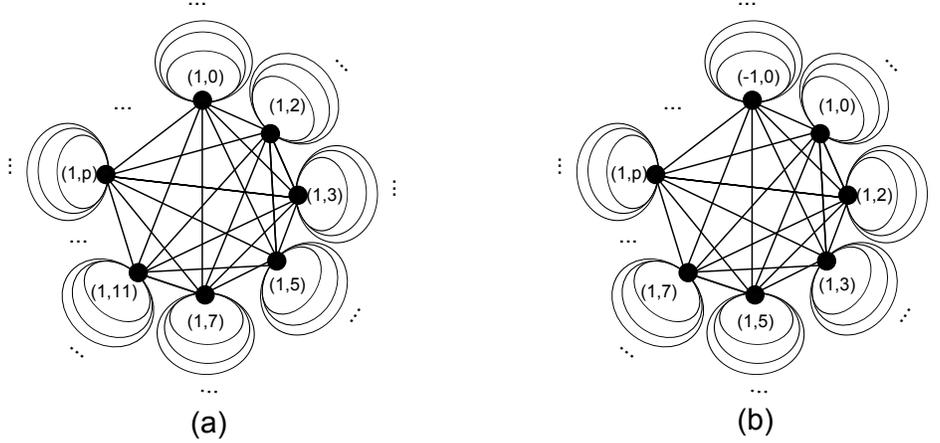}
	 \caption{(a) $G_\alpha^\beta \left( (1,0)\right)$\ \ \ (b) $G_\alpha^\text{v.s. associate} \left( (1,0)\right)$}
	 \label{fig:Irreducible divisor graph very strongly associate}
 \end{figure}
\indent We now turn our attention to the divisor graphs, where we do not restrict the factors to types of irreducibles, but instead allow any divisors of $(1,0)$.  The factorizations come in the form
$$(1,0)=(1,0)^{i_0}(1,n_1)^{i_1}(1,n_2)^{i_2}\cdots(1,n_m)^{i_m}$$
where $n_i$ is some non-unit, positive, natural number for $1 \leq i \leq m$.  If we are looking up to very strongly associate, then we need to again allow factorizations of the form:
$$(1,0)=(-1,0)^{i'_{0}}(1,0)^{i_0}(1,n_1)^{i_1}(1,n_2)^{i_2}\cdots(1,n_m)^{i_m}.$$
\indent When we choose associate or strong associate, we again get a complete graph on an infinite number of vertices generated by elements 
$$\{(1,n)\mid \text{ n is non-negative integer, but not 1 }\}$$ 
with each vertex having an infinite number of loops.  When we choose very strong associate we have the vertex set
$$\{(1,n)\mid \text{ n is non-negative integer, but not 1 }\}\cup \{(-1,0)\}.$$  
\indent Hence for $\beta \in \{$ associate, strong associate $\}$, we get the following divisor graphs
\begin{figure}[H]
	 \centering
		 \includegraphics[scale=.7]{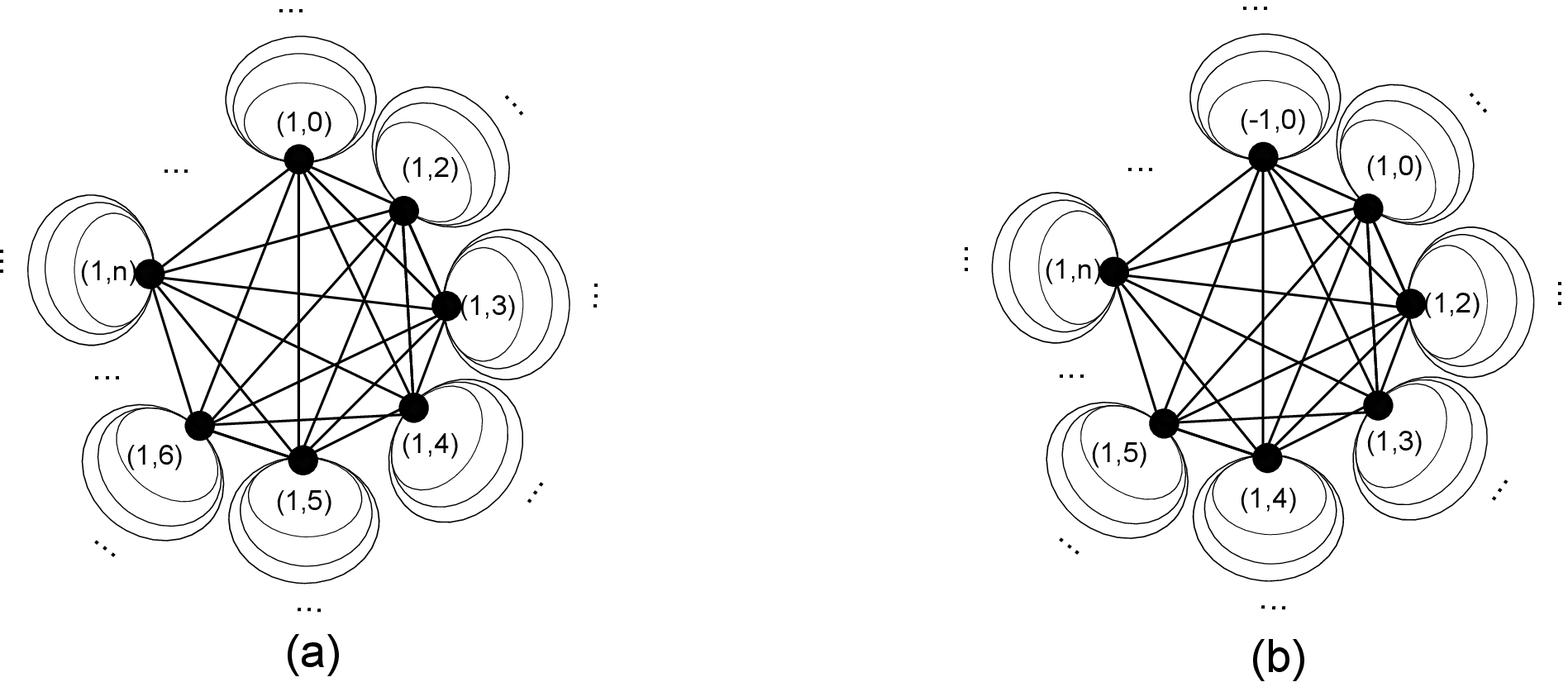}
	 \caption{(a) $G_\emptyset^\beta \left( (1,0)\right)$ \ \ \ (b) $G_\emptyset^\text{v.s. associate} \left( (1,0)\right)$}
	 \label{fig:Irreducible divisor graph no atoms}
 \end{figure}

\indent The last group of factorizations to consider will be the m-irreducible, and very strongly irreducible factorizations.  We know that the vertex set will be $\{(1,p) \mid p \text{ is a non-zero prime } \}$, so $(1,0)$ is no longer among these as demonstrated above.  Furthermore, we have seen that to successfully have a factorization of $(1,0)$ it is necessary for $(1,0)$ to occur as a factor.  This is not even a m-irreducible element, so there are can be no non-trivial m-irreducible or very strongly irreducible factorizations of $(1,0)$ and hence no edges between vertices or loops on any vertex.  Lastly, since all of these elements are regular, all of the associate relations coincide.
\\
\indent This means for $\alpha \in \{$ m-atomic, very strongly atomic $\}$ and $\beta \in \{$ associate, strongly associate, very strongly associate $\}$ we have the following for $G_\alpha^\beta \left( (1,0) \right)$.
\begin{figure}[H]
	 \centering
		 \includegraphics[scale=.7]{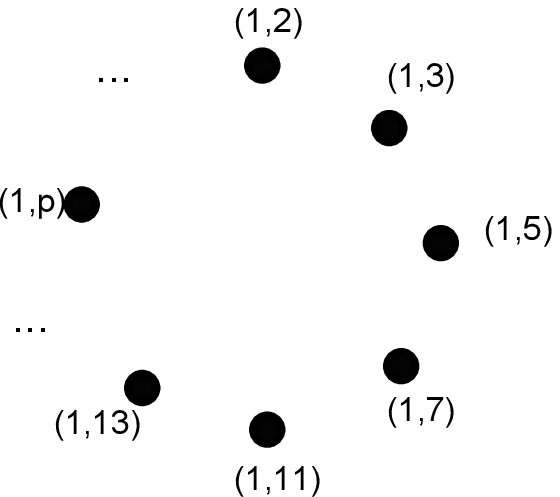}
	 \caption{$G_\alpha^\beta \left( (1,0)\right)$}
	 \label{fig:Irreducible divisor graph m-atomic}
 \end{figure}
\end{example}
\begin{remark} It is clear that none of these graphs are equal; however, the first four are certainly all isomorphic while the last one is completely disconnected.  This example also serves as a demonstration that many of inclusions suggested in Corollary \ref{cor: big one} are indeed strict.  Moreover, this example demonstrates that even for a strongly associate commutative ring with zero-divisors, $\Z \times \Z$, the irreducible divisor graph of irreducible and strongly irreducible elements can be quite complicated compared to the irreducible elements in the domain case.
\\

\indent The main issue above is that $(1,0)$ was not even m-irreducible.  It appears that for divisor graphs in rings with zero-divisors, irreducible and strongly irreducible is not quite powerful enough to get analogous results to the domain case.  To be a bit more optimistic, we do have several nice characterizations regarding the divisor graphs of the stronger choices for irreducible: m-irreducible and very strongly irreducible contained in the following theorems.
\end{remark}
\begin{theorem}  Let $R$ be a commutative ring.  If $x\in R$ is very strongly atomic, then we have the following.
\begin{enumerate}
\item $G_\emptyset^\text{strongly associate}(x)\cong K_1$, i.e. is a graph with one vertex and no loops. 
\item $G_\emptyset^\text{associate}(x)\cong K_1$. 
\item $G_\emptyset^\emptyset(x)$ is a collection of $\vert U(R) \vert$ totally disconnected vertices of the form $\{ \lambda x \mid \lambda \in U(R) \}$.
\end{enumerate}
\end{theorem}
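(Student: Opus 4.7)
The plan is to first establish a structural lemma on factorizations and then read off all three conclusions essentially by inspection. The lemma states: if $x$ is very strongly atomic and nonzero, then in every factorization $x = a_1 \cdots a_n$ in $R$ at most one of the $a_i$ is a non-unit. (The case $x=0$ is degenerate, since Theorem~\ref{thm: unrefinable} then forces $R$ to be a domain, and I would handle it by a separate remark or simply exclude it.)

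I would prove the lemma by induction on $n$. For $n=2$, suppose $x = a_1 a_2$ with both factors non-units; very strong atomicity gives $x \cong a_1$ (say), and since $x \neq 0$ the second clause of $\cong$ applied to the equation $x = a_2 \cdot a_1$ forces $a_2 \in U(R)$, a contradiction. For the inductive step I would bracket $x = a_1 \cdot (a_2 \cdots a_n)$ and invoke very strong atomicity to get $x \cong a_1$ or $x \cong a_2 \cdots a_n$. The first alternative forces $a_2 \cdots a_n \in U(R)$, hence each $a_i$ with $i \geq 2$ is a unit (a product of non-units is never a unit). The second alternative forces $a_1 \in U(R)$; a brief check shows that $a_1^{-1} x$ inherits very strong atomicity from $x$ (given any factorization $a_1^{-1} x = uv$ one rewrites $x = (a_1^{-1} u) v$ and applies v.s.a.\ of $x$), so the inductive hypothesis applies to the factorization $a_2 \cdots a_n = a_1^{-1} x$.

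For statement (3), I would verify the vertex set and then separately rule out edges and loops. Any non-unit divisor $d$ of $x$ fits in some factorization $x = d r$, and the lemma forces $r \in U(R)$, so that $d = r^{-1} x$ lies in $\{\lambda x \mid \lambda \in U(R)\}$; conversely each $\lambda x$ is a non-unit divisor of $x$ since it has the same principal ideal as $x$. An edge between non-unit vertices $a$ and $b$ would require a factorization $x = a b \cdot a_1 \cdots a_n$ with two non-unit factors, and a loop at $a$ would require $a^2 \mid x$, again producing two non-unit factors; the lemma forbids both. Hence the graph is totally disconnected on the vertex set $\{\lambda x \mid \lambda \in U(R)\}$.

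For statements (1) and (2), I would observe that any two vertices $\lambda x$ and $\lambda' x$ from the description in (3) satisfy $\lambda x = (\lambda (\lambda')^{-1}) (\lambda' x)$ with $\lambda (\lambda')^{-1} \in U(R)$, so they are strong associates and a fortiori associates. The vertex set therefore collapses to a single class under either identification, and the absence of edges and loops from (3) carries over to yield $K_1$ in both cases. The main obstacle is the auxiliary fact that unit multiples of a very strongly atomic element remain very strongly atomic, which powers the inductive step of the lemma; once the lemma is in hand, the three statements follow from routine bookkeeping about vertices, edges, and loops in $G_\emptyset^\beta(x)$.
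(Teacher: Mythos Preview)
Your proof is correct and follows essentially the same strategy as the paper's: both rest on the fact that a very strongly atomic element admits only trivial factorizations (equivalently, at most one non-unit factor). The paper simply asserts this, implicitly invoking the equivalent characterizations in \cite{Valdezleon}, whereas you prove it from scratch as your structural lemma; your version is therefore more self-contained. Your inductive argument is sound, though note the small slip in the parenthetical: from $a_1^{-1}x = uv$ one gets $x = (a_1 u)v$, not $(a_1^{-1}u)v$.

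Two further observations. First, you are right to flag the case $x=0$: when $R$ is a domain but not a field, $0$ is very strongly atomic yet every non-unit divides $0$, so parts (1)--(3) fail as written; the paper's proof tacitly assumes $x\neq 0$ without saying so. Second, you do not verify that $\{\lambda x \mid \lambda\in U(R)\}$ has exactly $|U(R)|$ elements, and in fact you should not: in $R=\Z/4\Z$ the element $2$ is very strongly atomic, yet $1\cdot 2 = 3\cdot 2$, so the vertex set of $G_\emptyset^\emptyset(2)$ is a single point while $|U(R)|=2$. Your description of the vertex set as the \emph{set} $\{\lambda x\}$ is the correct statement; the cardinality claim in the theorem is slightly overstated.
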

\begin{proof} (1) There are only trivial factorizations of $x$, so all factorizations are of the form $x=\lambda (\lambda^{-1}x)$ for a unit $\lambda \in U(R)$.  But this means all divisors of $x$ are strong associates of $x$.  This proves there can be only one vertex in $G_\emptyset^\text{strongly associate}(x)$.  If there were a loop, then we would have some $a\in R^{\#}$ such that $a^2 \mid x$, but this would imply $x=a \cdot a \cdot a_1 \cdots a_n$ is a factorization of length at least $2$, contradicting the fact that $x$ is very strongly atomic.  
\\
\indent (2) By Lemma \ref{lem: irreducible graphs by associate} since $G_\emptyset^\text{associate}(x)$ is a subgraph of $G_\emptyset^\text{strongly associate}(x)$ which is a single vertex with no loops and the fact that $x=1\cdot x$ is certainly a factorization, so $G_\emptyset^\text{associate}(x)$ is non-empty.  
\\
\indent (3) This follows from the assertion previously that all divisors of $x$ are strong associates of $x$ so they are unit multiples of $x$.  Hence the number of divisors of $x$ is precisely the number of units in $R$.  Because there are no non-trivial factorizations of $x$, there can be no edges in the $G_\emptyset^\emptyset(x)$ and therefore must be totally disconnected.
\end{proof}

\begin{theorem} Let $R$ be a commutative ring.  If $x\in R$ is m-atomic, then $\overline{G}_\emptyset^\text{associate}(x)\cong K_1$, i.e. $G_\emptyset^\text{associate}(x)$ is a graph with one vertex and possibly some loops.  
\end{theorem}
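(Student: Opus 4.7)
The plan is to show that under the hypothesis that $x$ is m-atomic, every non-unit divisor of $x$ is already associate to $x$, so the vertex set collapses to a single equivalence class. The reduced graph, which by definition kills all loops, is then $K_1$.

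First I would unpack the definitions. By construction $V(G_\emptyset^{\text{associate}}(x))$ consists of the non-unit divisors of $x$ taken up to the associate relation $\sim$. Since $x$ itself is a non-unit that divides $x$, this set is non-empty, so $V$ contains at least one vertex.

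Next I would show $V$ contains \emph{exactly} one vertex. Suppose $a$ is a non-unit divisor of $x$, say $x = ab$ for some $b \in R$. Then $(x) \subseteq (a)$, and $(a)$ is a proper principal ideal because $a$ is a non-unit. Since $x$ is m-atomic, $(x)$ is maximal in the poset of proper principal ideals of $R$, so the inclusion $(x) \subseteq (a) \subsetneq R$ forces $(x) = (a)$, i.e.\ $a \sim x$. Thus every vertex of $G_\emptyset^{\text{associate}}(x)$ is identified with the class of $x$, giving a single vertex.

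Finally, passing to the reduced graph $\overline{G}_\emptyset^{\text{associate}}(x)$ deletes all loops by definition, and with only one vertex there can be no non-loop edges. Hence $\overline{G}_\emptyset^{\text{associate}}(x) \cong K_1$, as claimed. I do not anticipate a serious obstacle; the one minor point worth noting is that loops on the single vertex of $G_\emptyset^{\text{associate}}(x)$ can genuinely occur (for instance when $x = 0$ in a field, where $0 = 0 \cdot 0 \cdots$ yields loops), which is why the conclusion is phrased for the reduced graph rather than for $G_\emptyset^{\text{associate}}(x)$ itself.
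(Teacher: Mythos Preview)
Your proof is correct and follows essentially the same approach as the paper's own argument: both show that any non-unit divisor of $x$ must be associate to $x$, forcing the vertex set to collapse to a single class. Your version is in fact slightly more explicit, since you spell out the ideal-theoretic step $(x) \subseteq (a) \subsetneq R \Rightarrow (x) = (a)$ from the definition of m-atomic, whereas the paper simply asserts that every divisor of an m-atomic element is an associate.
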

\begin{proof} Clearly, if $x$ is m-atomic, then $x=1\cdot x$ is a m-atomic factorization, which implies that $x \in V(\overline{G}_\emptyset^\text{associate}(x))$.  Suppose there is another vertex, say $y$.  Hence $y$ occurs as a factor in a factorization of $x$.  Suppose $x=y a_1 \cdots a_n$ is such a factorization.  Then since $x$ is m-atomic, we know that every divisor of $x$ is associate to $x$, proving the theorem.  In particular, $x \sim y$ and they are represented by the same vertex, but could possibly contribute a loop to the graph if the factorization is non-trivial. 
\end{proof}
The following gives a converse to the previous theorems.
\begin{theorem} Let $R$ be a commutative ring.  We have the following.
\begin{enumerate}
%(1) If there is a non-unit $x\in R$ such that $G_\emptyset^\text{strongly associate}(x)\cong K_1$, then $x$ is $\tau$-unrefinably atomic.
%\\
\item If $x\in R$ is a non-unit such that $G_\emptyset^\text{strongly associate}(x)\cong K_1$, then $x$ is very strongly atomic. 
\item If $x\in R$ is a non-unit such that $E(G_\emptyset^\emptyset(x))=\emptyset$, then $x$ is very strongly atomic.
\item If there is a non-unit $x\in R$ such that $\overline{G}_\emptyset^\text{associate}(x)\cong K_1$, then $x$ is m-atomic.
\end{enumerate}
\end{theorem}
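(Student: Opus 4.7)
The plan is to translate each graph hypothesis into a purely factorization-theoretic statement about $x$ and then verify the relevant atomicity definition. Parts (1) and (2) share a common two-step argument, while part (3) follows almost immediately from the single-vertex condition on the reduced graph.

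For parts (1) and (2), the first step I would carry out is to show that the hypothesis forbids any factorization $x=bc$ with both $b$ and $c$ non-units. In (1), since $x$ is a non-unit divisor of itself, the class $[x]$ is a vertex of $G_\emptyset^\text{s. associate}(x)$ and hence is the unique vertex, so every non-unit divisor of $x$ lies in $[x]$; a hypothetical factorization $x=bc$ with $b,c$ both non-unit would then place $b,c\in[x]$ and contribute a loop at $[x]$, contradicting $G\cong K_1$. In (2) the same factorization directly produces an edge (or loop) in $G_\emptyset^\emptyset(x)$, contradicting $E(G)=\emptyset$.

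The second step: given any factorization $x=bc$, the above forces at least one factor to be a unit, say $c$, so that $b=c^{-1}x$ is a non-unit (else $x$ would be a unit) with $(b)=(x)$ and $b\approx x$. To upgrade $b\approx x$ to $b\cong x$, I would verify that whenever $x=rb$ for some $r\in R$, the element $r$ must be a unit. But if $r$ were a non-unit, then $x=rb$ would itself be a factorization of $x$ with both factors non-unit, again contradicting the graph condition. The degenerate case $x=0$ (which forces $b=0$) is absorbed by the $a=b=0$ clause in the definition of $\cong$. This yields $x\cong b$ in every case, so $x$ is very strongly atomic.

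For part (3), $\overline{G}_\emptyset^\text{associate}(x)\cong K_1$ says that the vertex set, which consists of the associate classes of non-unit divisors of $x$, has a unique element; since $x$ is a non-unit divisor of itself, that unique class is $[x]$, so every non-unit divisor of $x$ is associate to $x$. If $(x)\subseteq(a)$ for some non-unit $a$, then $a\mid x$, so $a\sim x$ and $(a)=(x)$, giving m-atomicity. The only delicate point, and the main (mild) obstacle, appears in (1) and (2): the graph hypothesis must be invoked twice — once on the original $x=bc$ and once on the auxiliary $x=rb$ — in order to promote strong associativity $\approx$ to very strong associativity $\cong$. Without the second invocation one would only conclude that $x$ is strongly atomic, which is strictly weaker than very strongly atomic in a general ring with zero-divisors.
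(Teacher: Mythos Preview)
Your proposal is correct and follows essentially the same approach as the paper's proof: for (1) and (2) both arguments hinge on showing that the graph hypothesis excludes any factorization $x=bc$ with both factors non-unit, and for (3) both observe that a single vertex forces every non-unit divisor to be associate to $x$. The only difference is that the paper invokes the known characterization ``$x$ is very strongly atomic $\Leftrightarrow$ $x$ admits no non-trivial factorization'' implicitly (jumping directly from ``$x$ not very strongly atomic'' to ``there is a factorization of length $\geq 2$''), whereas you make this explicit by separately arguing that $x\approx b$ upgrades to $x\cong b$ via a second application of the graph hypothesis---a welcome bit of care, but not a genuinely different route.
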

\begin{proof} (1) Suppose $G_\emptyset^\text{strongly associate}(x)\cong K_1$ and $x$ were not very strongly atomic.  Let $x=a_1 \cdots a_n$ be a factorization with $n \geq 2$.  Then there is an edge in $G_\emptyset^\text{strongly associate}(x)$ between $a_1$ and $a_2$, or possibly a loop if $a_1 \approx a_2$.  Either way, it contradicts the hypothesis that $G_\emptyset^\text{strongly associate}(x)\cong K_1$.
\\
\indent (2) Let $x\in R$ be a non-unit.  Suppose $x=ab$ for some non-units $a,b \in R$.  Then $a \mid x$ and $b \mid x$, so $a, b \in V(G_\emptyset^\emptyset(x))$, possibly the same vertex.  We have $x=1\cdot ab$ which implies $ab\mid x$ showing that there is an edge (possibly a loop) between $a$ and $b$.  This is a contradiction since $E(G_\emptyset^\emptyset(x))=\emptyset$.  This proves there can be no non-trivial factorizations of $x$, making $x$ very strongly atomic as desired.
\\
\indent (3) Let $x\in R$ be a non-unit such that $\overline{G}_\emptyset^\text{associate}(x)\cong K_1$.  We suppose for a moment that $x$ were not m-irreducible.  Then there is a factorization $x=a_1 \cdots a_n$ such that there is an $a_i$ such that $x \not \sim a_i$.  But then $a_i$ is a distinct vertex in $\overline{G}_\emptyset^\text{associate}(x)$ from $x$, a contradiction of the hypothesis that $\overline{G}_\emptyset^\text{associate}(x)\cong K_1$.
\end{proof}

\begin{theorem}  Let $R$ be a commutative ring.  If $x\in R$ is atomic (resp. strongly atomic), then Diam($G_\emptyset^\text{associate}(x))$ (resp. Diam($G_\emptyset^\text{strongly associate}(x))$) is at most $2$.  Moreover, there is a vertex which is associate (resp. strongly associate) to $x$ such that every vertex is adjacent to this vertex.\end{theorem}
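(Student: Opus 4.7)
The strategy is to identify a single vertex $v$ in $V(G_\emptyset^{\text{associate}}(x))$ that is adjacent to every other vertex, and then conclude the diameter bound by concatenating two such edges. The natural candidate is the vertex lying in the associate class of $x$ itself, which is a vertex of the graph because the trivial factorization $x=1\cdot x$ shows $x\mid x$ and $x$ is a non-unit by hypothesis.

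To see that every vertex $y$ distinct from $v$ is adjacent to $v$, I would argue as follows. Since $y\mid x$, write $x=yz$ for some $z\in R$. First note that $z$ cannot be a unit: otherwise $y=z^{-1}x$ would be associate to $x$, placing $y$ and $v$ in the same vertex class. Hence $x=yz$ is a factorization by two non-units. Now invoke the atomicity of $x$: either $y\sim x$ or $z\sim x$. The first case has been excluded, so $z\sim x$, meaning $z$ represents the class of $v$. The factorization $x=yz$ then witnesses the edge between the vertices $y$ and $v$ in $G_\emptyset^{\text{associate}}(x)$. Since every vertex is either $v$ itself or is adjacent to $v$, any two vertices are connected by a path of length at most two through $v$, giving $\text{Diam}(G_\emptyset^{\text{associate}}(x))\le 2$.

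The strongly atomic case is the verbatim analogue. For a vertex $y\ne v$ in $G_\emptyset^{\text{strongly associate}}(x)$, write $x=yz$; if $z\in U(R)$ then $y=z^{-1}x\approx x$ forces $y=v$, a contradiction. Strong atomicity of $x$ applied to $x=yz$ then yields $y\approx x$ or $z\approx x$, and the first is impossible, so $z\approx x$, and the factorization $x=yz$ exhibits the desired edge between $y$ and the strongly-associate class of $x$.

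The only genuine subtlety is conventional: because vertices represent equivalence classes, adjacency between $[y]$ and $[x]$ should be interpreted as existing whenever some factorization of $x$ has factors in the respective classes, not only when the chosen class representatives themselves appear verbatim. This is precisely the convention implicit in the identification argument of Lemma \ref{lem: irreducible graphs by associate}, and under it the theorem reduces, as above, essentially to unwinding the definition of (strong) atomicity on the factorization $x=yz$ extracted from $y\mid x$.
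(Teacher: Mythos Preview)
Your argument is correct and follows the same strategy as the paper: show that the vertex representing the (strong) associate class of $x$ is adjacent to every other vertex by applying atomicity to a factorization containing that vertex. The paper's version differs only cosmetically, writing a general $n$-term factorization $x=a_1\cdots a_n$ and then substituting $a_i=\mu x$ to make $x$ appear literally as a factor (thereby handling the representative issue explicitly), whereas you work directly with the two-term decomposition $x=yz$ and invoke the class convention---a mild streamlining, since atomicity is stated for two-factor products.
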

\begin{proof}  Let $a_1 \in V(G_\emptyset^\text{associate}(x))$ (resp. $a_1 \in V(G_\emptyset^\text{strongly associate}(x))$).  Then $a_1 \mid x$, say $x=a_1 \cdots a_n$ is a factorization.  Since $x$ is atomic (resp. strongly atomic), $x \sim a_i$ (resp. $x \approx a_i$) for some $1 \leq i \leq n$.  If $x \sim a_1$ (resp. $x\approx  a_1$), then they are in fact represented by the same vertex in the graph: whichever was chosen at the associate (resp. strong associate) class representative of $x$.  If $x \sim a_i$ (resp. $x \approx a_i$) for $2 \leq i \leq n$, say $a_i=\mu x$ for some $\mu \in R$ (resp. $a_i=\mu x$ for $\mu \in U(R)$).  Then we have a factorization
$$x=a_1a_2 \cdots a_{i-1} (\mu x) a_{i+1} \cdots a_n= \mu xa_1a_2 \cdots a_{i-1}\cdot\widehat{a_i} \cdot a_{i+1} \cdots a_n$$ 
(where $\widehat{a_i}$ indicates $a_i$ is omitted) showing $xa_1 \mid x$ and therefore $a_1$ and $x$ are adjacent as desired.  If every vertex in a graph is adjacent to a single vertex, then the diameter of the graph is certainly no larger than $2$.
\end{proof}

\section{Irreducible Divisor Graph and Finite Factorization Properties}
In this section, we investigate the relationship between finite factorization properties defined in \cite{Valdezleon} that rings may possess and characteristics of the various $\alpha$-$\beta$-irreducible divisor graphs.  
\\
\indent We begin with a remark demonstrating the relationship between factorizations of a non-unit $x\in R$ and pseudo-cliques in the divisor graph.
\begin{remark} Let $\alpha \in \{ \emptyset,$ irreducible, strongly irreducible, m-irreducible, very strongly irreducible $\}$ and let $\beta \in \{ \emptyset,$ associate, strongly associate, very strongly associate $\}$.  Let $x\in R$ be a non-unit and $x=a_1 \cdots a_n$ be a $\alpha$-factorization of $x$.  Then there is an associated pseudo-clique in $G_\alpha^\beta (x)$.  Suppose $a_1, \ldots a_s$ are distinct factors of $x$ up to $\beta$ with $1 \leq s \leq n$.  We then may rewrite the factorization in the form $x=a_1^{e_1}a_2^{e_2} \cdots a_s^{e_s}$ where $e_1 + e_2 + \cdots + e_s=n$.  Then there is a pseudo-clique subgraph in $G_\alpha^\beta (x)$ with vertex set $\{a_1, \ldots, a_s \}$ such that $a_i$ and $a_j$ are adjacent for all $i \neq j$ and $1 \leq i,j \leq s$ and $a_i$ has $e_i-1$ loops for each $1 \leq i \leq s$.  We refer to this as the \emph{subgraph associated to the factorization} and will denote it $S$.  
\\
\indent If we look at the reduced graph, $\overline{S}$, by removing the loops from $S$, we get $\overline{S} \cong K_s$.  So $\omega(S)=\omega(\overline{S})=s$.  We could also count the number of edges in $\overline{S}$, it would be $\binom{s}{2}=\frac{s(s-1)}{2}$.  On the other hand, in $R=\Z/2\Z \times \Z/2\Z$ $(1,0)=(1,0)^i$ yields arbitrarily long factorizations.  This leads to a graph with a vertex having an infinite number of loops.  It is here that we see $R$ fails to be a FFR or even a BFR.  This motivates the introduction of studying the pseudo-clique number, denoted $\Omega(S)$, of a graph rather than just the clique number.  
\\
\indent Recall from Section \ref{subsec: graph theory} that the pseudo-clique number of a graph is the number of edges and loops in the largest pseudo-clique in the graph.  A graph is said to have infinite pseudo-clique number if there are pseudo-cliques with arbitrarily many edges or loops. The pseudo-clique number of the subgraph, $S$, associated with the factorization $x= a_1 \cdots a_n= a_1^{e_1}a_2^{e_2} \cdots a_s^{e_s}$, is given by the following function
$$\phi(n,s)=\Omega(S)=\binom{s}{2} + (n-s)=\frac{s(s-1)}{2} + n - s.$$
\indent Given a factorization of length $n$, we can compute explicitly the pseudo-clique number of the associated graph as a function of $s$, the number of distinct divisors. The number of edges is maximal when each factor is distinct, and minimal when there are only one or two distinct factors.  For an $\alpha$-factorization of length $n$, as a function of $s$, the number of distinct factors, we have
$$n-1 \leq \phi(n,s) \leq \binom{n}{2}=\frac{n(n-1)}{2}.$$
These bounds are tight in the sense that they can be achieved on the low end when $s=1$ or $s=2$, $\phi(n,1)=n-1=\phi(n,2)$ and on the high end, when $s=n$, we have $\phi(n,n)=\binom{n}{2}$.

\end{remark}
\begin{theorem}\label{thm: accp} Let $R$ be a commutative ring and let $\alpha \in \{$ atomic, strongly atomic, m-atomic very strongly atomic $\}$ and let $\beta \in \{$ associate, strongly associate, very strongly associate $\}$.  If $R$ is $\alpha$ and for all $x \in R$, a non-unit, and for all $a \in V(G_\alpha^\beta(x))$, degl$(a) < \infty$, then $D$ satisfies ACCP.
\end{theorem}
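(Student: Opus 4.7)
The plan is a proof by contradiction. Suppose $R$ is $\alpha$ and the graph hypothesis holds, yet ACCP fails. Choose a strictly ascending chain $(a_0) \subsetneq (a_1) \subsetneq (a_2) \subsetneq \cdots$, and, after passing to a tail, assume $a_0 \neq 0$. Then each $a_i = r_i a_{i+1}$ with $r_i$ a non-unit (otherwise the inclusion would be an equality), and telescoping yields
$$a_0 = r_0 r_1 \cdots r_{N-1} a_N \qquad \text{for every } N \geq 1.$$
The witnessing graph will be $G_\alpha^\beta(a_0)$, in which I will exhibit a vertex of infinite loop-degree.

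Since $R$ is $\alpha$, each non-unit $r_i$ admits an $\alpha$-factorization; fix one $\alpha$-factor $c_i$ of $r_i$ for every $i \geq 0$. Then $c_i \mid r_i \mid a_0$ and $c_i$ is $\alpha$, so $c_i$ represents a vertex of $G_\alpha^\beta(a_0)$. For each $N \geq 1$, concatenating $\alpha$-factorizations of $r_0, \ldots, r_{N-1}$ and of $a_N$ produces a single $\alpha$-factorization $F_N$ of $a_0$ in which $c_0, c_1, \ldots, c_{N-1}$ all appear among the factor positions. Because edges and loops of $G_\alpha^\beta(a_0)$ are by definition exactly those witnessed by $\alpha$-factorizations of $a_0$, the family $\{F_N\}$ is what converts the chain into graph-theoretic data.

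Now dichotomize on the set $S = \{[c_i]_\beta : i \geq 0\}$ of $\beta$-classes, all of which are vertices of $G_\alpha^\beta(a_0)$. If $S$ is infinite, fix $v = [c_{i_0}]_\beta \in S$; infinitely many $[c_i]_\beta$ are then pairwise distinct and different from $v$, and for each such $[c_{i_k}]_\beta$, taking $N > \max(i_0, i_k)$ places $c_{i_0}$ and $c_{i_k}$ as distinct factor positions of $F_N$, realizing $v[c_{i_k}]_\beta$ as an edge. Thus $v$ acquires infinitely many distinct neighbors, forcing deg$(v) = \infty$ and hence degl$(v) = \infty$, a contradiction. If instead $S$ is finite, pigeonhole gives an infinite index set $I$ and a single vertex $v$ with $[c_i]_\beta = v$ for all $i \in I$; then $F_N$ exhibits the class $v$ with multiplicity at least $|I \cap [0, N)|$, which is unbounded in $N$, so arbitrarily large powers of representatives of $v$ appear as parts of $\alpha$-factorizations of $a_0$. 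By the loop convention, $v$ carries infinitely many loops and again degl$(v) = \infty$, a contradiction.

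The main obstacle is the bookkeeping that makes the edges and loops read off from $F_N$ count as edges and loops of $G_\alpha^\beta(a_0)$: the graph definitions demand that the $c_i$'s coexist as distinct factor positions within a single $\alpha$-factorization of $a_0$, not merely as divisors of $a_0$. Once the $F_N$'s are built from the telescoped product together with the $\alpha$-hypothesis on $R$, the remainder is the ``pseudo-clique grows in one of two directions'' principle implicit in the remark preceding the theorem: either the pseudo-clique at some fixed vertex grows in the number of distinct neighbors (Case 1) or it grows in the number of loops concentrated on a single class (Case 2), and both possibilities violate the finite loop-degree hypothesis.
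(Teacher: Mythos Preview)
Your proof is correct and follows essentially the same route as the paper: contradict ACCP, telescope the strictly ascending chain into arbitrarily long $\alpha$-factorizations of the base element, then dichotomize on whether the resulting $\alpha$-factors give infinitely many $\beta$-classes (forcing infinite degree at a fixed vertex) or finitely many (forcing infinitely many loops at a repeated vertex). You are in fact slightly more careful than the paper in one spot---you explicitly $\alpha$-factor the tail $a_N$ so that each $F_N$ is a bona fide $\alpha$-factorization, a detail the paper's iterated display leaves implicit since it still carries the un-factored $x_{k+1}$ as a leading term.
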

\begin{proof} Suppose $R$ did not satisfy ACCP.  Then there exists a chain of principal ideals $(x_1) \subsetneq (x_2) \subsetneq (x_3) \subsetneq \cdots$ .  Say 

\begin{equation}\label{1} x_i = x_{i+1}\cdot a_{i1} \cdots a_{in_i}\end{equation}

\noindent is a factorization for each $i$.  Because $R$ is $\alpha$, we may replace each $a_{ij}$ with a $\alpha$ factorization.  This allows us to assume each factor in Equation \eqref{1} is $\alpha$.  We may assume further that each $a_{ij}$ is one of the pre-chosen $\beta$-representatives.  We may iterate these substitutions as follows 

\begin{equation}\label{2}x_1 = x_{2}\cdot a_{11} \cdots a_{1n_1}= x_{3}\cdot a_{21} \cdots a_{2n_2}\cdot a_{11} \cdots a_{1n_1}= \cdots\end{equation}

\noindent and each is a factorizations with $a_{ij}$ being $\alpha$ for all $i$ and $j$.  Because $(x_i)$ is properly contained in  $(x_{i+1})$, in Equation \eqref{1} $n_i \geq 1$ or else $x_i \sim x_{i+1}$.  This means the factorizations in each iteration of Equation \eqref{2} strictly increase in length.  If $\{a_{ij}\}$ is infinite, then $a_{11}$ has an infinite number of adjacent vertices in $V(G_\alpha^\beta (x))$, i.e $degl(a_{11}) \geq deg(a_{11})=\infty$.  Otherwise, if $\{a_{ij}\}$ is finite, then one of the $a_{i_0j_0}$ for some $i_0$ and $j_0$ occurs an infinite number of times.  Hence degl$(a_{i_0j_0})=\infty$ in $G_\alpha^\beta(x)$ since arbitrarily large powers of $a_{i_0j_0}$ divide $x_1$.  This is a contradiction and thus $R$ must satisfy ACCP as desired.
\end{proof}
\indent We could also state the previous theorem without the atomic hypothesis as follows.

\begin{theorem}  Let $R$ be a commutative ring.  If for all $x \in R$, a non-unit, and for all $a \in V(G_\emptyset^\beta(x))$, degl$(a) < \infty$, then $D$ satisfies ACCP.
\end{theorem}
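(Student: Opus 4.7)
The plan is to mirror the argument of Theorem \ref{thm: accp}, the difference being that without the atomic hypothesis on $R$ we no longer need to refine intermediate factorizations into products of atoms: in the graph $G_\emptyset^\beta(x)$ any factorization whatsoever is allowed to contribute edges and loops, so the refinement step from the earlier proof is simply dropped.

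First I would argue by contradiction. Suppose ACCP fails, and fix a strictly ascending chain $(x_1) \subsetneq (x_2) \subsetneq (x_3) \subsetneq \cdots$ of principal ideals. Strict containment forces every $x_j$ to be a non-unit (otherwise the chain would have reached $R$), and for each $i$ it yields a non-unit $b_i$ with $x_i = x_{i+1} b_i$. Iterating these equations produces, for every $i$, a factorization
$$x_1 = x_{i+1} \cdot b_i \cdot b_{i-1} \cdots b_1$$
of $x_1$, whose length $i+1$ grows without bound. Each factor here is a non-unit divisor of $x_1$, so the $\beta$-class of each factor is a vertex of $G_\emptyset^\beta(x_1)$.

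Next I would read off from this family of long factorizations either unbounded degree or unbounded loops at some vertex. Focus on the vertex $v_1 := [b_1]$. The edge rule of $G_\emptyset^\beta$ says $v_1$ is adjacent to the $\beta$-class of every other factor in the displayed factorization, and the loop rule says that repeated occurrences of factors in the same $\beta$-class as $v_1$ contribute loops at $v_1$. Now there is a dichotomy. Either the collection of $\beta$-classes $\{[b_j],[x_{i+1}]\}_{i,j}$ is infinite, in which case $v_1$ eventually acquires infinitely many distinct neighbors and degl$(v_1) \geq \deg(v_1) = \infty$; or it is finite, in which case pigeonhole supplies a $\beta$-class $[c]$ containing infinitely many of the $b_j$'s. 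In the second case, for any $k$ I would choose $i$ large enough that $k$ of the factors $b_j$ lie in $[c]$; after rewriting each such factor as an appropriate multiple of the representative $c$ and absorbing the associating multipliers into the remaining factor, the result is a factorization $x_1 = c^k \cdot d$, yielding $k-1$ loops at $[c]$. Letting $k \to \infty$ gives degl$([c]) = \infty$. In either branch we contradict the hypothesis.

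The main obstacle I expect is the bookkeeping in the second branch: verifying that a single factorization containing many $\beta$-equivalents of $c$ can be legitimately rewritten as a factorization displaying $c^k$ explicitly. When $\beta$ is strong or very strong associate this is immediate since equivalents differ by units; when $\beta$ is ordinary associate one uses that $(c') = (c)$ implies $c' = rc$ for some $r \in R$, and all of the resulting $r$'s can then be collapsed into a single remaining factor of the product. In the case $\beta = \emptyset$ identical factors contribute loops directly without any rewriting. Once this detail is in place, the remainder of the argument is essentially a transcription of the proof of Theorem \ref{thm: accp} with the atomic refinement step removed.
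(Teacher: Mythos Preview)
Your proposal is correct and follows essentially the same route as the paper's own proof, which simply says the argument is identical to that of Theorem~\ref{thm: accp} with the atomic-refinement step removed. Your write-up supplies more detail than the paper does---in particular, the careful bookkeeping in the finite-class branch showing that one can legitimately rewrite $x_1 = c^k d$ with $d$ a non-unit (since $d$ is a multiple of the non-unit $x_{i+1}$) across all choices of $\beta$---but the structure is the same: contradict ACCP, telescope the chain into arbitrarily long factorizations of $x_1$, and split on whether the resulting $\beta$-classes are infinite (forcing infinite degree at $[b_1]$) or finite (forcing unbounded loops at some $[c]$).
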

\begin{proof} The proof of this is identical to \ref{thm: accp}, except we need not worry about refining the factorizations into atomic factorizations.  The rest of the argument goes through in the same fashion.
\end{proof}

\begin{theorem} \label{thm: bfr} Let $R$ be a commutative ring.  Let $\beta \in \{ \emptyset$, associate, strong associate, very strong associate $\}$ and let $x\in R$ be a non-unit.  If $G^{\beta}_{\emptyset}(x)$ has a finite pseudo-clique number, then there is a bound on the length of factorizations of $x$.  If this holds for all non-units $x\in R$, then $R$ is a BFR.
\end{theorem}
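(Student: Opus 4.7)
The plan is to exploit directly the pseudo-clique/factorization correspondence set up in the preceding remark and simply contrapose it. Given any factorization $x = a_1 \cdots a_n$, let $s$ denote the number of distinct factors up to $\beta$. By the remark, this factorization yields a subgraph $S \subseteq G^\beta_\emptyset(x)$ that is a pseudo-clique with pseudo-clique number
\[
\Omega(S) = \phi(n,s) = \binom{s}{2} + (n-s),
\]
and the remark records the uniform lower bound $\phi(n,s) \geq n-1$ (achieved when $s=1$ or $s=2$).

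First I would fix a non-unit $x \in R$ and assume $\Omega(G^\beta_\emptyset(x)) = M < \infty$. For any factorization $x = a_1 \cdots a_n$, the associated subgraph $S$ is a pseudo-clique inside $G^\beta_\emptyset(x)$, so $\Omega(S) \leq M$. Combining with the lower bound gives
\[
n - 1 \leq \phi(n,s) = \Omega(S) \leq M,
\]
hence $n \leq M + 1$. Thus $N(x) := M + 1$ is the desired uniform bound on the length of any factorization of $x$, which is the first conclusion.

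For the second statement, if the hypothesis holds for every non-unit $x \in R$, then the argument above produces, for each such $x$, a natural number $N(x)$ bounding all factorization lengths of $x$. This is exactly the definition of a bounded factorization ring as given in Section~2, so $R$ is a BFR.

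I do not anticipate a real obstacle here: the entire content sits in the remark, which already identifies factorizations of length $n$ with pseudo-cliques having at least $n-1$ edges-plus-loops. The only thing one needs to be mildly careful about is that the subgraph $S$ really is a sub-pseudo-clique of $G^\beta_\emptyset(x)$ — but this is exactly how $S$ was constructed in the remark (every pairwise product $a_i a_j$ with $i \neq j$ divides $x$, yielding the required edges, and the multiplicities $e_i \geq 2$ produce the $e_i - 1$ loops at $a_i$), so $\Omega(S) \leq \Omega(G^\beta_\emptyset(x))$ is immediate.
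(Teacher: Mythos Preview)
Your proposal is correct and follows essentially the same argument as the paper: both invoke the preceding remark to bound $n-1 \leq \Omega(S) \leq \Omega(G^\beta_\emptyset(x))$ for the pseudo-clique $S$ associated to a length-$n$ factorization, and then set $N(x)$ to be one more than the pseudo-clique number. Your write-up is slightly more explicit about why $S$ sits inside $G^\beta_\emptyset(x)$ as a sub-pseudo-clique, but the strategy and the bound are identical.
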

\begin{proof} Suppose $\Omega(G^{\beta}_{\emptyset}(x))=N_x<\infty$. Then by the computations done in the remarks, a factorization of length $n$, $x=a_1 \cdots a_n$, yields an associated pseudo-clique $S$ and $n-1 \leq \Omega(S) \leq N_x$.  Thus we may set $N (x)=N_x +1$ and we have found a bound on the length of any factorization of $x$.  The final statement is immediate by definition of BFR.
\end{proof}
\indent There are authors who define a BFR in terms of bounds on lengths of atomic factorizations instead.  So if $\alpha \in \{$ atomic, strongly atomic, m-atomic, very strongly atomic $\}$, then we will say that $R$ is a \emph{$\alpha$-bounded factorization ring ($\alpha$-BFR)} if for every non-unit $x\in R$, there is a bound on the length of $\alpha$-factorizations of $x$, i.e. factorizations in which every factor is $\alpha$.  It is clear that BFR the way we have defined it is stronger than $\alpha$-BFR for any choice of $\alpha$ since any $\alpha$-factorization is certainly a factorization.  It is also clear that if one assumes the ring $R$ is $\alpha$, then the two notions are equivalent.  With this in mind, we have the following theorem.

\begin{theorem} \label{thm: bfr alt} Let $R$ be a commutative ring and let $\alpha \in \{$ atomic, strongly atomic, m-atomic, very strongly atomic $\}$ and let $\beta \in \{ \emptyset$, associate, strong associate, very strong associate $\}$.  Let $x\in R$ be a non-unit.  If $G^{\beta}_{\alpha}(x)$ has a finite pseudo-clique number, then there is a bound on the length of factorizations of $x$.  If this holds for all non-units $x\in R$, then $R$ is a $\alpha$-BFR.
\end{theorem}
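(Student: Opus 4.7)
The plan is to adapt the proof of Theorem \ref{thm: bfr} to the restricted setting of $\alpha$-factorizations. The only real subtlety is that the pseudo-clique we extract from a factorization must actually sit inside $G^{\beta}_{\alpha}(x)$ and not merely in the larger graph $G^{\beta}_{\emptyset}(x)$; once that is checked, the same numerical bound $\phi(n,s) \geq n-1$ from the remark preceding the theorem gives the result.

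First I would fix a non-unit $x \in R$ and set $N_x := \Omega(G^{\beta}_{\alpha}(x)) < \infty$. Let $x = a_1 \cdots a_n$ be an arbitrary $\alpha$-factorization. Grouping factors by $\beta$-class, rewrite it as $x = a_1^{e_1} \cdots a_s^{e_s}$ with $e_1 + \cdots + e_s = n$, where the $a_i$ are the distinct $\beta$-class representatives appearing. Since each $a_i$ is $\alpha$ and divides $x$, every $a_i$ lies in $V(G^{\beta}_{\alpha}(x))$.

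Next I would verify that the associated subgraph $S$ (complete on $\{a_1,\ldots,a_s\}$, with $e_i - 1$ loops at each $a_i$) is actually a subgraph of $G^{\beta}_{\alpha}(x)$. For any $i \neq j$, reordering the $\alpha$-factorization places $a_i$ and $a_j$ as the first two factors, giving an $\alpha$-factorization of $x$ which by definition produces the edge $a_i a_j$ in $E(G^{\beta}_{\alpha}(x))$; similarly, whenever $e_i \geq 2$, grouping $e_i$ copies of $a_i$ yields the $e_i - 1$ loops at $a_i$. Applying the pseudo-clique formula $\phi(n,s) = \binom{s}{2} + (n-s)$ from the preceding remark, I then have $\Omega(S) = \phi(n,s) \geq n - 1$, and $S \subseteq G^{\beta}_{\alpha}(x)$ forces $\phi(n,s) \leq N_x$. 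Hence $n \leq N_x + 1$, so setting $N(x) := N_x + 1$ gives a uniform bound on the lengths of all $\alpha$-factorizations of $x$. The second statement is then immediate from the definition of $\alpha$-BFR.

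The main (minor) obstacle is precisely the verification that reordering and regrouping an $\alpha$-factorization again yields an $\alpha$-factorization, so that the edges and loops of $S$ genuinely lie in the restricted graph $G^{\beta}_{\alpha}(x)$. This is essentially automatic because the property of being $\alpha$ is an intrinsic property of each individual factor, unaffected by permutation, but it is the single point at which this proof diverges from the proof of Theorem \ref{thm: bfr} and deserves to be called out explicitly.
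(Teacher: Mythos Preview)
Your proposal is correct and follows essentially the same approach as the paper's proof: set $N_x=\Omega(G^{\beta}_{\alpha}(x))$, associate to any $\alpha$-factorization of length $n$ its pseudo-clique $S$ inside $G^{\beta}_{\alpha}(x)$, and use $n-1\leq \Omega(S)\leq N_x$ to bound $n$. The only difference is that you spell out in detail why $S$ actually lies in $G^{\beta}_{\alpha}(x)$ (namely, that reordering an $\alpha$-factorization is still an $\alpha$-factorization), whereas the paper simply asserts this by invoking the preceding remark.
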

\begin{proof} Suppose $\Omega(G^{\beta}_{\alpha}(x))=N_x<\infty$. Then a $\alpha$-factorization of length $n$, $x=a_1 \cdots a_n$, yields an associated pseudo-clique $S$ in $G_\alpha^\beta (x)$ and $n-1 \leq \Omega(S) \leq N_x$.  Thus we may set $N(x)=N_x +1$ and we have found a bound on the length of any $\alpha$-factorization of $x$.  The final statement is immediate by definition of $\alpha$-BFR.
\end{proof}

\begin{theorem} \label{thm: ffr} Let $R$ be a commutative ring and let $\beta \in \{$ associate, strong associate, very strong associate $\}$.    Let $x\in R$ be a non-unit.  Then the following are equivalent.
\begin{enumerate}
\item $x$ has a finite number of factorizations up to rearrangement and $\beta$.
\item $\sum_{a\in V(G^{\beta}_{\emptyset}(x))}\text{degl}(a) < \infty.$ 
\item $\vert E(G^{\beta}_{\emptyset}(x))\vert < \infty$.
\end{enumerate}
\end{theorem}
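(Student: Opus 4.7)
The plan is to prove $(1)\Rightarrow(3)\Leftrightarrow(2)\Rightarrow(1)$, using the pseudo-clique bookkeeping from the remark preceding this theorem together with Theorem \ref{thm: bfr} as the main tools. The equivalence $(2)\Leftrightarrow(3)$ is essentially the handshake lemma for loopy graphs: each non-loop edge contributes $2$ to $\sum_a \text{degl}(a)$ while each loop contributes $1$, so
\[
\sum_{a\in V(G_\emptyset^\beta(x))}\text{degl}(a)=2\cdot\left|\{\text{non-loop edges}\}\right|+\left|\{\text{loops}\}\right|,
\]
and this sum is finite if and only if $|E(G_\emptyset^\beta(x))|$ is.

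For $(1)\Rightarrow(3)$, I will use that every edge and every loop of $G_\emptyset^\beta(x)$ is witnessed by some factorization of $x$: an edge $ab$ arises from a factorization $x=ab\cdot a_1\cdots a_n$, and $k$ loops at $a$ arise from a factorization containing at least $k+1$ copies of $a$. A single factorization of length $n$ with $s$ distinct factors (up to $\beta$) contributes exactly $\phi(n,s)\leq \binom{n}{2}$ edges and loops to its associated pseudo-clique. Since $x$ has only finitely many factorizations up to rearrangement and $\beta$, and each is of finite length, the union of their pseudo-cliques covers $E(G_\emptyset^\beta(x))$ with finitely many edges and loops, giving $|E(G_\emptyset^\beta(x))|<\infty$.

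For $(3)\Rightarrow(1)$, I would first invoke Theorem \ref{thm: bfr}: every pseudo-clique is a subgraph, so $\Omega(G_\emptyset^\beta(x))\leq |E(G_\emptyset^\beta(x))|<\infty$ supplies a uniform length bound $N(x)$ on factorizations of $x$. Next, any non-unit factor appearing in a factorization of length at least $2$ is paired with another factor of that factorization, so it is incident to an edge or loop and hence satisfies $\text{degl}\geq 1$; the set of such $\beta$-classes is finite, being bounded above by $2|E(G_\emptyset^\beta(x))|$. Combining the length bound $N(x)$ with a finite pool of possible factors, there are only finitely many multisets of factors of size at most $N(x)$ on the finite vertex pool, so finitely many factorizations up to rearrangement and $\beta$; the length-$1$ factorization $x=x$ accounts for one additional case.

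The main obstacle I anticipate is the careful bookkeeping at the $\beta$-level in $(3)\Rightarrow(1)$: because the vertex set of $G_\emptyset^\beta(x)$ consists of $\beta$-class representatives, I need each factor in each factorization of $x$ to correspond cleanly to exactly one vertex with $\text{degl}\geq 1$, and the count of multisets on this finite vertex set (subject to the length bound) must really reproduce the count of factorizations up to rearrangement and $\beta$. The convention (already in force in the definitions of BFR and $\alpha$-BFR) that factorizations are taken into non-units is essential here; without it, unit factors could be inserted freely and condition $(1)$ would be vacuous whenever $|U(R)|=\infty$.
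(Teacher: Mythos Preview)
Your proof is correct, and the handshake argument for $(2)\Leftrightarrow(3)$ matches the paper exactly. Where you diverge is in the other two implications. For the forward direction you prove $(1)\Rightarrow(3)$ directly, noting that the finitely many factorizations, each of finite length, cover $E(G_\emptyset^\beta(x))$ via their associated pseudo-cliques; the paper instead establishes the contrapositive of $(1)\Rightarrow(2)$, arguing that an infinite degree sum forces either infinitely many non-$\beta$ divisors or a vertex with infinitely many loops, and in either case produces an infinite list of factorizations of strictly increasing length. For $(3)\Rightarrow(1)$ you invoke Theorem~\ref{thm: bfr} to obtain a uniform length bound and then count multisets over the finite pool of vertices with $\text{degl}\geq 1$; the paper instead observes that the map sending a factorization (up to rearrangement and $\beta$) to its associated pseudo-clique subgraph is injective, and bounds the number of such subgraphs by $2^{|E|}$. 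Your route makes the length bound explicit and ties this theorem back to Theorem~\ref{thm: bfr}, at the cost of the extra bookkeeping you flag about isolated vertices and the length-one factorization; the paper's subset count is terser but implicitly treats the loops at each vertex as distinguishable when invoking the $2^N$ bound. Both arguments are sound.
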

\begin{proof}
(1) $\Rightarrow$ (2) We suppose $\sum_{a\in V(G^{\beta}_{\emptyset}(x))}\text{degl}(a)$ is infinite.  If $V(G^{\beta}_{\emptyset}(x))$ is infinite, then there are an infinite number of non-$\beta$ divisors of $x$ and therefore there must be an infinite number of non-$\beta$ factorizations.  This tells us that $V(G^{\beta}_{\emptyset}(x))$ must be finite.  If $V(G^{\beta}_{\emptyset}(x))$ is finite, then there must be some $a \in V(G^{\beta}_{\emptyset}(x))$ for which degl$(a)$ is infinite.  If deg$(a)$ is infinite, then there would be an infinite number of non-$\beta$ divisors adjacent to $a$, a contradiction as before since we know that $V(G^{\beta}_{\emptyset}(x))$ is finite.  This means there must be an $a\in V(G^{\beta}_{\emptyset}(x))$ for which there are an infinite number of loops.  This yields arbitrarily long factorizations of $x$ since $a^n \mid x$ for all $n \in \N$.  This gives us an infinite number of factorizations of $x$, none of which can be rearranged up to associate.  
\\
\indent For instance, $a\mid x$ implies there is a factorization of the form $x=a\cdot b_{1_1} \cdots b_{m_1}$.  Now, since arbitrarily long powers of $a$ divide $x$, $a^{m_1 +1}\mid x$.  This implies there is a factorization of the form $a \cdots a b_{1_2} \cdots b_{m_2}$ where $a$ occurs $m_1 +1$ times.  This factorization cannot be rearranged up to associates to match the first factorization of $x$ since there are more factors of $a$ than there are total factors in the first factorization.  This process can be repeated to get a sequence of factorizations of $x$ which grow properly in length.  Hence we have found an infinite number of factorizations of $x$ up to rearrangement and $\beta$, a contradiction.
\\
\indent (2) $\Rightarrow$ (3) Suppose $\sum_{a\in V(G^{\beta}_{\emptyset}(x))}\text{degl}(a) =D \in \mathbb{N}$.  Then 
$$\vert E(G^{\beta}_{\emptyset}(x))\vert = \vert E(\overline{G}^{\beta}_{\emptyset}(x))\vert + \left(\vert E(G^{\beta}_{\emptyset}(x))\vert - \vert E(\overline{G}^{\beta}_{\emptyset}(x))\vert \right) = E + L$$
where the first term, $E$ represents the number of simple edges and the second term, $L$, represents the number of loops in the graph.  Each edge in $\overline{G}^{\beta}_{\emptyset}(x))$ contributes $2$ to the sum $\sum_{a\in V(G^{\beta}_{\emptyset}(x))}\text{degl}(a)$ and each loop contributes $1$ to $\sum_{a\in V(G^{\beta}_{\emptyset}(x))}\text{degl}(a)$.  So in particular, we have
$$D=\sum_{a\in V(G^{\beta}_{\emptyset}(x))}\text{degl}(a)=2E + L \leq 2(E + L) = 2\vert E(G^{\beta}_{\emptyset}(x))\vert $$  
and
$$D=\sum_{a\in V(G^{\beta}_{\emptyset}(x))}\text{degl}(a)=2E + L \geq E + L = \vert E(G^{\beta}_{\emptyset}(x))\vert $$  
This shows $\sum_{a\in V(G^{\beta}_{\emptyset}(x))}\text{degl}(a)$ is bounded below by $\vert E(G^{\beta}_{\emptyset}(x))\vert$ and above by $2\vert E(G^{\beta}_{\emptyset}(x))\vert$, showing that if $\sum_{a\in V(G^{\beta}_{\emptyset}(x))}\text{degl}(a)$ is finite, then so too is $\vert E(G^{\beta}_{\emptyset}(x))\vert < \infty$.
\\
\indent (3) $\Rightarrow$ (1)  We begin by noticing that any factorization of $x$, $x=a_1 \cdots a_n$ corresponds to a subgraph of $G^{\beta}_{\emptyset}(x)$, in particular a pseudo-clique.  The vertices are the non-$\beta$ $a_i$ among $\{a_1, \ldots, a_n\}$ with an edge between $a_i$ and $a_j$ if they are not $\beta$.  If $a_i$ occurs $m$ times in the factorization, then there are $m-1$ loops in the subgraph graph.  By hypothesis, there are a finite number of edges in $G^{\beta}_{\emptyset}(x)$, say $N$.  Suppose there are an infinite number factorizations of $x$, none of which can be rearranged up to $\beta$.  This would correspond to an infinite number of choices for subsets of the edge set.  However, $2^N$ is finite and is the number of all possible subsets of choices of edges or loops a contradiction, completing the proof.  
\end{proof}

\begin{corollary} \label{cor: ffr} Let $R$ be a commutative ring and let $\beta \in \{$ associate, strong associate, very strong associate $\}$.  Then the following are equivalent.
\begin{enumerate}
\item $R$ is a $\beta$-FFR.
\item For all non-units, $x\in R$, we have 
$$\sum_{a\in V(G^{\beta}_{\emptyset}(x))}\text{degl}(a) < \infty.$$ 
\item For all non-units, $x\in R$, we have $$\vert E(G^{\beta}_{\emptyset}(x))\vert < \infty.$$
\end{enumerate}
\indent Furthermore, the following are also equivalent.
\begin{enumerate}
\item $R$ is a strong-FFR 
\item For all non-units, $x\in R$, we have
$$\sum_{a\in V(G^{\emptyset}_{\emptyset}(x))}\text{degl}(a) < \infty.$$
\item For all non-units, $x\in R$, we have 
$$\vert E(G^{\emptyset}_{\emptyset}(x))\vert < \infty.$$
\end{enumerate}
\end{corollary}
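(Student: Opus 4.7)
The plan is to deduce both chains of equivalences essentially by quantifying Theorem \ref{thm: ffr} over all non-units $x \in R$, treating the two halves of the corollary in parallel. For the first half, the equivalence of (1), (2), (3) for a fixed $x$ is exactly the content of Theorem \ref{thm: ffr}; the definition of a $\beta$-FFR is precisely that every non-unit has finitely many factorizations up to rearrangement and $\beta$, so universally quantifying (1) $\Leftrightarrow$ (2) $\Leftrightarrow$ (3) from Theorem \ref{thm: ffr} over all non-units gives exactly the three conditions in the first part of the corollary. Thus the first half requires no new argument beyond invoking the theorem.

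For the strong-FFR half, the issue is that Theorem \ref{thm: ffr} was stated only for $\beta \in \{$associate, strong associate, very strong associate$\}$, so I would first observe that the entire proof of Theorem \ref{thm: ffr} goes through verbatim with $\beta = \emptyset$. Indeed, in each of the three implications the only feature of $\beta$ that is used is that distinct vertices in $G^\beta_\emptyset(x)$ correspond to inequivalent divisors and that the pseudo-clique associated to a factorization records the multiplicities of the distinct factors appearing in it. When $\beta = \emptyset$ every non-unit divisor of $x$ is its own vertex in $G^\emptyset_\emptyset(x)$, so ``factorizations up to rearrangement and $\emptyset$'' is simply ``factorizations up to rearrangement,'' which is the definition of strong-FFR. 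One then reruns the three implications: (1) $\Rightarrow$ (2) by observing that an infinite vertex set yields infinitely many divisors and hence distinct factorizations, while an infinite degree or loop count at some vertex produces either unboundedly many distinct products or arbitrarily high powers of a single divisor; (2) $\Leftrightarrow$ (3) by the standard handshake bounds $|E| \le \sum \mathrm{degl} \le 2|E|$; and (3) $\Rightarrow$ (1) by the pigeonhole bound that there are only $2^{|E|}$ possible pseudo-clique subgraphs encoding distinct factorizations.

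The main obstacle, and really the only one, is to verify cleanly that nothing in the proof of Theorem \ref{thm: ffr} actually needed $\beta$ to be a nontrivial associate relation; in particular the step in (1) $\Rightarrow$ (2) that produced distinct factorizations from an infinite loop count must be re-examined, since without an associate identification we need the successive factorizations $x = a \cdot b_{1_1} \cdots b_{m_1},\ x = a^{m_1+1} \cdot b_{1_2} \cdots b_{m_2},\ \ldots$ to be genuinely distinct rather than just non-associate. But the increasing powers of $a$ prevent any rearrangement, so the argument is if anything easier in the $\beta = \emptyset$ case. Once this extension is noted, both equivalences in the corollary are immediate from Theorem \ref{thm: ffr} applied to every non-unit $x \in R$.
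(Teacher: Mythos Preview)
Your proposal is correct and follows essentially the same approach as the paper: the first set of equivalences is obtained by quantifying Theorem~\ref{thm: ffr} over all non-units, and the second set by noting that the proof of Theorem~\ref{thm: ffr} carries over verbatim when $\beta = \emptyset$, so that factorizations are counted up to rearrangement only and the graph $G^\emptyset_\emptyset(x)$ has every divisor as its own vertex. Your additional verification that the $(1)\Rightarrow(2)$ step actually becomes easier in the $\beta=\emptyset$ case is a nice touch that the paper leaves implicit.
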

\begin{proof} The first set of equivalences are an immediate corollary to Theorem \ref{thm: ffr} and the definitions.  The second set of equivalences are simply the analogue for strong-FFR.  We are no longer thinking of factorizations that can be rearranged up to associate as being the same.  It can be proved in the same way as the proof of Theorem \ref{thm: ffr}, but by not looking at factorizations up to any type of associate and similarly using the graph in which every divisor of $x$ appears, not just one associate class representative.
\end{proof}
\begin{remark} In fact, if $R$ is a $\beta$-FFR for any choice of $\beta \in \{$ associate, strong associate, very strong associate $\}$, then $R$ is pr\'esimplifiable.  This in turn forces all of the associate relations (and irreducible definitions) to coincide for non-zero, non-units.  For instance, if $R$ were not pr\'esimplifiable, then there is a non-zero $x\in R$ and a non-unit $y\in R$ such that $x=xy$.  This yields factorizations of the form $x=xy=(xy)y=(xy)yy= \cdots$ which generates a list of increasingly long factorizations which would contradict the hypothesis that $R$ were a FFR.  This same argument also shows that for $R$ to be a BFR, $R$ is also necessarily pr\'esimplifiable.  This is discussed in \cite{Valdezleon}.
\end{remark}
\indent We can use the previous results and the divisor graph for a simple proof of a result from \cite{Valdezleon} that a FFR is a BFR.

\begin{theorem} Let $R$ be a commutative ring and let $\beta \in \{$ associate, strong associate, very strong associate $\}$.  If $R$ be a $\beta$-FFR, then $R$ is a BFR.
\end{theorem}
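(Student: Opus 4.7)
The plan is to obtain this result as a direct consequence of the graph-theoretic characterizations already established, namely Corollary \ref{cor: ffr} (which converts the $\beta$-FFR hypothesis into a statement about the edge set of $G_\emptyset^\beta(x)$) and Theorem \ref{thm: bfr} (which converts a finite pseudo-clique number into a bound on factorization lengths). The bridge between the two is the trivial observation that the pseudo-clique number of any graph is bounded above by the total number of edges (including loops), since a pseudo-clique is a subgraph and its "size" in the sense defined in Section \ref{subsec: graph theory} is precisely the cardinality of its edge set.

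First, I would fix an arbitrary non-unit $x \in R$. Since $R$ is assumed to be a $\beta$-FFR, Corollary \ref{cor: ffr} immediately yields
\[
\bigl| E\bigl(G_\emptyset^\beta(x)\bigr) \bigr| < \infty.
\]
Next, I would note that because every pseudo-clique subgraph of $G_\emptyset^\beta(x)$ has its edges (loops included) drawn from $E(G_\emptyset^\beta(x))$, we have
\[
\Omega\bigl(G_\emptyset^\beta(x)\bigr) \leq \bigl| E\bigl(G_\emptyset^\beta(x)\bigr) \bigr| < \infty.
\]
At this point Theorem \ref{thm: bfr} applies directly: a finite pseudo-clique number for $G_\emptyset^\beta(x)$ furnishes a bound $N(x)$ on the length of every factorization of $x$. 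Since $x$ was an arbitrary non-unit, the definition of BFR is satisfied.

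There is essentially no main obstacle here; the theorem is a clean corollary of the machinery already developed. The only place where one must be a touch careful is in confirming that the pseudo-clique number really is dominated by the edge count as used above — but this is immediate from the definition given in Section \ref{subsec: graph theory}, where $\Omega(G)$ is the supremum of edge-set cardinalities of pseudo-clique subgraphs of $G$, and every subgraph inherits edges from $G$. Thus the entire proof amounts to chaining together the two previous results, with the finite-edge-set condition serving as the interface between them.
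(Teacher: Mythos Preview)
Your proposal is correct and follows essentially the same argument as the paper: invoke Corollary \ref{cor: ffr} to get $\vert E(G^{\beta}_{\emptyset}(x))\vert < \infty$, observe that $\Omega(G^{\beta}_{\emptyset}(x)) \leq \vert E(G^{\beta}_{\emptyset}(x))\vert$, and then apply Theorem \ref{thm: bfr}. The paper's proof is identical in structure and in the results it cites.
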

\begin{proof} Let $R$ be a $\beta$-FFR.  Let $x \in R$ be a non-unit.  By Corollary \ref{cor: ffr}, we know that for $x\in R$, we have $\vert E(G^{\beta}_{\emptyset}(x))\vert < \infty$.  Suppose $\vert E(G^{\beta}_{\emptyset}(x))\vert=N \in \mathbb{N}$.  Then since the pseudo-clique number is the size of the edge set of the largest pseudo-clique in $ E(G^{\beta}_{\emptyset}(x))$, we certainly have $\Omega(G^{\beta}_{\emptyset}(x)) \leq N$.  This shows the pseudo-clique number of $G^{\beta}_{\emptyset}(x)$ is finite and an application of Theorem \ref{thm: bfr} implies that $R$ is a BFR as desired.
\end{proof}

\begin{theorem} \label{thm: wffr} Let $R$ be a commutative ring and let $\beta \in \{$ associate, strong associate, very strong associate $\}$.  Then we have the following.
\begin{enumerate}
\item A non-unit $x\in R$ has a finite number of divisors up to $\beta$ if and only if $V(G^{\beta}_{\emptyset}(x))$ is finite.  
\item A non-unit $x\in R$ has a finite number of divisors if and only if $V(G^{\emptyset}_{\emptyset}(x))$ is finite.
\item $R$ is a $\beta$-WFFR if and only if for all $x\in R$ not a unit, $\vert V(G^{\beta}_{\emptyset}(x)) \vert < \infty$.
\item $R$ is strong-WFFR (i.e. every non-unit has a finite number of divisors) if and only if $V(G^{\emptyset}_{\emptyset}(x))$ is finite for all non-units $x\in R$.
\end{enumerate}
\end{theorem}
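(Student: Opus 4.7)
The plan is to prove all four parts by directly unpacking the definition of the relevant vertex set and comparing it with the corresponding finite-divisor condition; no substantive graph-theoretic argument is needed, since each equivalence is essentially a tautology once one traces the definitions.

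First I would handle (1). By construction, $V(G^{\beta}_{\emptyset}(x))=\{a\in A_\emptyset^\beta(R)\mid a\mid x\}$ is precisely a chosen set of $\beta$-class representatives for the non-unit divisors of $x$. Since every unit of $R$ is a $\beta$-associate of $1$ (for any of the three choices of $\beta$), the units form a single $\beta$-class. Consequently the set of $\beta$-classes of divisors of $x$ and the set $V(G^\beta_\emptyset(x))$ differ by at most one element, and hence one is finite if and only if the other is. Part (3) then follows immediately by quantifying (1) over all non-units $x\in R$ and invoking the definition of $\beta$-WFFR.

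For (2), I would observe that $V(G^{\emptyset}_{\emptyset}(x))$ is by construction $\{a\in A(R)\mid a\mid x\}$, i.e. the set of non-unit divisors of $x$. Interpreting "finite number of divisors" in the natural way (consistent with the author's convention in the strong-WFFR definition), the biconditional is immediate. Part (4) then follows by quantifying (2) over all non-units $x$ and invoking the definition of strong-WFFR.

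The only bookkeeping step worth highlighting, and arguably the main (minor) obstacle, is the treatment of unit divisors: in (1) and (3), units collapse to a single $\beta$-class and therefore contribute at most one harmless vertex to the count; in (2) and (4), one must read "divisors" as "non-unit divisors" (or else assume $|U(R)|<\infty$), since units are by construction excluded from $V(G^\emptyset_\emptyset(x))$. Beyond this definitional matching, no further argument is required, and the proof should be very short.
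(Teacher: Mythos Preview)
Your proposal is correct and matches the paper's approach: both arguments simply unpack the definition of the vertex set and read off the equivalences. Your treatment of the unit-divisor bookkeeping is in fact more careful than the paper's own one-line proof, which glosses over the exclusion of units from $A(R)=R-U(R)$.
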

\begin{proof} (1) The set of vertices of $G^{\beta}_{\emptyset}(x)$ are precisely the set of representatives, up to $\beta$, of the divisors of $x$. (2) Similarly, $V(G^{\emptyset}_{\emptyset}(x))$ is the set of all divisors of $x$. (3) This is immediate from (1) and the definition of $\beta$-WFFR.  (4) This is immediate from (2) and the definition of a strong-WFFR.
\end{proof}

\begin{theorem} \label{thm: idf} Let $R$ be a commutative ring and let $\alpha \in \{$ atomic, strongly atomic, m-atomic, very strongly atomic $\}$ and $\beta \in \{$ associate, strong associate, very strong associate $\}$.  Then we have the following.
\begin{enumerate}
\item A non-unit $x\in R$ has a finite number of $\alpha$-divisors up to $\beta$ if and only if $V(G^{\beta}_{\alpha}(x))$ is finite.  
\item A non-unit $x\in R$ has a finite number of $\alpha$-divisors if and only if $V(G^{\emptyset}_{\alpha}(x))$ is finite.
\item $R$ is a $\alpha$-$\beta$-idf ring if and only if for all $x\in R$ not a unit, $\vert V(G^{\beta}_{\alpha}(x)) \vert < \infty$.
\item $R$ is strong-$\alpha$-divisor finite ring (i.e. every non-unit has a finite number of $\alpha$-divisors) if and only if $V(G^{\emptyset}_{\alpha}(x))$ is finite for all non-units $x\in R$.
\end{enumerate}
\end{theorem}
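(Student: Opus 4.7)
The plan is to prove each of (1)--(4) by directly unwinding the definitions of the vertex sets of the graphs $G_\alpha^\beta(x)$ and $G_\alpha^\emptyset(x)$, following the same template as the proof of Theorem \ref{thm: wffr}. In fact, once the corresponding statements of Theorem \ref{thm: wffr} are in hand, the present theorem is obtained by restricting attention from arbitrary divisors of $x$ to the subset of $\alpha$-divisors.

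For part (1), I would start from the definition $V(G_\alpha^\beta(x)) = \{a \in A_\alpha^\beta(R) \mid a \mid x\}$, where $A_\alpha^\beta(R)$ is the pre-chosen set of $\beta$-class representatives of $A_\alpha(R)$. Since each of $\beta \in \{\sim, \approx, \cong\}$ implies $\sim$, and $a \sim a'$ means $(a)=(a')$, a given $\alpha$-element $a$ divides $x$ if and only if its chosen $\beta$-representative $a' \in A_\alpha^\beta(R)$ divides $x$. Hence $V(G_\alpha^\beta(x))$ is in bijection with the set of $\beta$-classes of $\alpha$-divisors of $x$, so one is finite if and only if the other is. Part (2) is the special case where no identifications are made: $A_\alpha^\emptyset(R) = A_\alpha(R)$, and the vertex set is literally the set of $\alpha$-divisors of $x$, so finiteness of the two sets is a tautology.

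Parts (3) and (4) then follow by quantifying over all non-units $x \in R$ and invoking the definitions of an $\alpha$-$\beta$-df ring and a strong-$\alpha$-df ring, respectively; there is nothing further to check. The only real obstacle, and it is purely bookkeeping, is the observation used in part (1) that replacing an $\alpha$-divisor of $x$ by its selected $\beta$-representative preserves divisibility of $x$; this is immediate from the implication chain $\cong \Rightarrow \approx \Rightarrow \sim$ combined with the equivalence $a \sim a' \Leftrightarrow (a) = (a')$. One should also remark that the hypothesis $\alpha \in \{$atomic, strongly atomic, m-atomic, very strongly atomic$\}$ is only used to ensure the vertex set is well-defined in the natural way; it imposes no additional factorization assumption on $R$ for the equivalences to hold.
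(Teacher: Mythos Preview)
Your proposal is correct and follows essentially the same approach as the paper's own proof, which is extremely terse: the paper simply asserts that $V(G_\alpha^\beta(x))$ is by definition the set of $\beta$-representatives of the $\alpha$-divisors of $x$, and that (3) and (4) follow immediately from (1) and (2) together with the definitions. Your version is slightly more careful in making explicit the bijection between $V(G_\alpha^\beta(x))$ and the $\beta$-classes of $\alpha$-divisors of $x$, in particular the observation that any $\beta$-relation implies $\sim$ and hence preserves divisibility of $x$; this detail is left implicit in the paper.
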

\begin{proof} (1) The set of vertices of $G^{\beta}_{\alpha}(x)$ are precisely the set of representatives, up to $\beta$, of the $\alpha$-divisors of $x$. (2) Similarly, $V(G^{\emptyset}_{\alpha}(x))$ is the set of all $\alpha$-divisors of $x$. (3) This is immediate from (1) and the definition of $\alpha$-$\beta$-idf ring.  (4) This is immediate from (2) and the definition of a strong-$\alpha$-divisor finite ring.
\end{proof}

\indent The following theorem was proved in \cite[Proposition 6.6]{Valdezleon} by D.D. Anderson and S. Valdez-Leon.
\begin{theorem} (\cite[Proposition 6.6]{Valdezleon}) For a commutative ring $R$, the following are equivalent. 
\begin{enumerate}
\item $R$ is a FFR.
\item $R$ is a BFR and a WFFR.
\item $R$ is pr\'esimplifiable and a WFFR.
\item $R$ is a BFR and an atomic divisor finite ring.
\item $R$ is a pr\'esimplifiable and an atomic divisor finite ring.
\end{enumerate}
\end{theorem}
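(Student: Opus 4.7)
The plan is to derive the equivalences from the graph-theoretic characterizations of BFR, FFR, WFFR, and the atomic divisor finite property (Theorems \ref{thm: bfr}, \ref{thm: ffr}, \ref{thm: wffr}, \ref{thm: idf}, and Corollary \ref{cor: ffr}), together with the remark preceding this theorem that both FFR and BFR force $R$ to be pr\'esimplifiable. I would prove the two cycles $(1) \Rightarrow (2) \Rightarrow (3) \Rightarrow (1)$ and $(1) \Rightarrow (4) \Rightarrow (5) \Rightarrow (1)$.

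The forward implications $(1) \Rightarrow (2)$ and $(1) \Rightarrow (4)$ combine the preceding theorem (FFR $\Rightarrow$ BFR) with the observation that every divisor of $x$ appears as a factor in some factorization, so finitely many factorizations yield finitely many divisors (WFFR, and a fortiori atomic divisor finiteness). The implications $(2) \Rightarrow (3)$ and $(4) \Rightarrow (5)$ are just the remark BFR $\Rightarrow$ pr\'esimplifiable. For the crucial $(3) \Rightarrow (1)$, fix a non-zero non-unit $x \in R$; Theorem \ref{thm: wffr} makes $V := V(G_\emptyset^\beta(x))$ finite, and I would show each vertex $a \in V$ carries only finitely many loops. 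Otherwise $a^k \mid x$ for arbitrarily large $k$, so $x = a^k b_k$ with $b_k \in V$; pigeonhole yields $k_1 < k_2$ with $b_{k_1} \sim b_{k_2}$, pr\'esimplifiability upgrades this to $b_{k_1} = u b_{k_2}$ for some $u \in U(R)$, and substituting into $x = a^{k_1} b_{k_1} = a^{k_2} b_{k_2}$ gives $x = x u^{-1} a^{k_2 - k_1}$, which by pr\'esimplifiability forces $a \in U(R)$ or $x = 0$, both impossible. Hence $|E(G_\emptyset^\beta(x))| \leq \binom{|V|}{2} + \sum_{a \in V}(\text{loops on }a) < \infty$, and Corollary \ref{cor: ffr} delivers FFR.

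The second cycle runs in parallel. $(5) \Rightarrow (4)$ applies the same pigeonhole argument to atomic factorizations: pr\'esimplifiability caps the multiplicity of each atomic factor while the atomic divisor finite hypothesis caps the number of distinct ones, so atomic factorization lengths of $x$ are bounded; refining an arbitrary factorization to an atomic one (using atomicity obtained from ACCP via Theorem \ref{thm: accp} together with pr\'esimplifiability) converts this bound into a BFR bound. Then $(4) \Rightarrow (1)$ follows because BFR with pr\'esimplifiable yields ACCP and hence atomicity, whence every divisor is a bounded-length product drawn from the finite atomic divisor pool, giving finitely many divisors (WFFR); $(2) \Rightarrow (1)$ finishes by combining finite $|V|$ with $\Omega(G_\emptyset^\beta(x)) < \infty$ to bound both loops per vertex and the $\binom{|V|}{2}$ non-loop edges, making $|E|$ finite. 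The main obstacle is the atomicity step inside $(5) \Rightarrow (4)$: promoting a bound on atomic factorization length to a genuine BFR bound requires refining non-atomic factorizations into atomic ones, which without some form of ACCP or atomicity could fail (for example in antimatter pr\'esimplifiable domains), and Anderson--Valdez-Leon bridge this gap via the atomicity machinery that pr\'esimplifiability together with ACCP makes available through Theorem \ref{thm: accp}.
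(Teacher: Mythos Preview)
The paper does not give its own proof of this statement; it is quoted as \cite[Proposition~6.6]{Valdezleon} and invoked as a black box in the subsequent Theorem~\ref{thm: ffr-wffr}. There is therefore no in-paper argument to compare your proposal against.

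On its merits, your proposal handles the cycle $(1)\Leftrightarrow(2)\Leftrightarrow(3)$ and the forward implications $(1)\Rightarrow(4)\Rightarrow(5)$ correctly, and your argument for $(4)\Rightarrow(1)$ via BFR $\Rightarrow$ ACCP $\Rightarrow$ atomic is sound. The genuine gap is exactly where you locate it: the step $(5)\Rightarrow(4)$. Your appeal to Theorem~\ref{thm: accp} is circular---that theorem \emph{assumes} $R$ is $\alpha$ in order to conclude ACCP, so it cannot be used to manufacture atomicity from pr\'esimplifiable $+$ idf. Worse, the antimatter example you yourself raise is a counterexample to the implication as literally stated here: a valuation domain with value group $\mathbb{Q}$ is a domain (hence pr\'esimplifiable) with no irreducibles whatsoever (hence vacuously an atomic-divisor-finite ring under the paper's definition of $\alpha$-$\beta$-df ring), yet any non-unit of positive value has infinitely many non-associate two-term factorizations, so the ring is not an FFR. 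Thus $(5)\Rightarrow(1)$ fails with the paper's definitions taken at face value. The fix is that the idf hypothesis in (4) and (5) must be accompanied by atomicity---this is precisely the ``$\alpha$\ $\alpha$-$\beta$-df ring'' node in the diagram of Section~2, and once that hypothesis is added your refine-and-pigeonhole outline for $(5)\Rightarrow(4)\Rightarrow(1)$ goes through. So the obstruction you identified is real and cannot be argued around; it reflects a missing atomicity hypothesis in the transcription of the cited result rather than a repairable defect in your strategy.
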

\indent As mentioned earlier, the conditions of FFR, BFR, and pr\'esimplifiable all have the affect of making the associate relations and irreducibles coincide.  This allows us to combine several of the previous results with \cite[Proposition 6.6]{Valdezleon} in the following theorem.
\begin{theorem} \label{thm: ffr-wffr}Let $R$ be a commutative ring and let $\alpha \in \{$ atomic, strongly atomic, m-atomic, very strongly atomic $\}$ and $\beta \in \{ $ associate, strong associate, very strong associate $\}$.  Then the following are equivalent for any (hence all) choices of $\alpha$ and $\beta$.
\begin{enumerate}
\item $R$ is a $\beta$-FFR.
\item $R$ is a BFR and a $\beta$-WFFR.
\item $R$ is pr\'esimplifiable and a $\beta$-WFFR.
\item $R$ is a BFR and a $\alpha$-$\beta$-divisor finite ring.
\item $R$ is a pr\'esimplifiable and a $\alpha$-$\beta$-divisor finite ring.
\item For all non-units, $x\in R$, we have $\sum_{a\in V(G^{\beta}_{\emptyset}(x))}\text{degl}(a) < \infty.$ 
\item For all non-units, $x\in R$, we have $\vert E(G^{\beta}_{\emptyset}(x))\vert < \infty$.
\item $R$ is a BFR and for all $x\in R$ not a unit, $\vert V(G^{\beta}_{\emptyset}(x)) \vert < \infty$.
\item $R$ is a pr\'esimplifiable and for all $x\in R$ not a unit, $\vert V(G^{\beta}_{\emptyset}(x)) \vert < \infty$.
\item $R$ is a BFR and $x\in R$ not a unit, $\vert V(G^{\beta}_{\alpha}(x)) \vert < \infty$.
\item $R$ is a pr\'esimplifiable and $x\in R$ not a unit, $\vert V(G^{\beta}_{\alpha}(x)) \vert < \infty$.
\end{enumerate}
\end{theorem}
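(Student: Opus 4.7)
The plan is to assemble this equivalence from pieces already in hand: Valdez-Léon's Proposition 6.6 (cited immediately above the theorem) handles the purely ring-theoretic part, while Corollary \ref{cor: ffr}, Theorem \ref{thm: wffr}, and Theorem \ref{thm: idf} translate each of the finite factorization properties into the graph-theoretic statements about $G_\emptyset^\beta(x)$ and $G_\alpha^\beta(x)$. My approach will be to first justify that the theorem is invariant under the choice of $\alpha$ and $\beta$, then ride the cited equivalences around the full cycle.

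The first step is to settle the $\alpha,\beta$-independence. I would begin by observing that any of the conditions (1)--(5) forces $R$ to be pr\'esimplifiable: for (1),(2), this is explicitly recorded in the remark following Corollary \ref{cor: ffr}; for (3),(5) it is part of the hypothesis; and (4) gives BFR, which is pr\'esimplifiable by the same remark. In a pr\'esimplifiable ring, Theorem \ref{thm: presimplifiable} (together with the paragraph following it) shows that the associate, strong associate, and very strong associate relations coincide, and likewise for the irreducible notions. Hence the meaning of $\beta$-FFR, $\beta$-WFFR, and $\alpha$-$\beta$-divisor finite is independent of the choice of $\alpha$ and $\beta$, and the graphs $G_\alpha^\beta(x)$ for different admissible $\alpha,\beta$ agree as well, so the statement is legitimately indifferent to the choice.

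Next I would invoke Valdez-Léon's Proposition 6.6 to obtain the equivalence of (1) through (5). Because that result is stated in \cite{Valdezleon} for particular fixed notions, the preceding independence remark is precisely what is needed to promote it to our parametrized form. With (1)--(5) in place, I would attach the graph conditions one block at a time: Corollary \ref{cor: ffr} yields (1) $\iff$ (6) $\iff$ (7) directly; Theorem \ref{thm: wffr}(3) converts the $\beta$-WFFR hypothesis in (2) and (3) into the vertex-finiteness of $G_\emptyset^\beta(x)$, giving (2) $\iff$ (8) and (3) $\iff$ (9); and Theorem \ref{thm: idf}(3) does the analogous job for $\alpha$-$\beta$-divisor finite rings, yielding (4) $\iff$ (10) and (5) $\iff$ (11). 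Chaining these pairwise equivalences through (1)--(5) closes the cycle.

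The main obstacle I anticipate is exactly the $\alpha,\beta$-invariance step, because Proposition 6.6 in \cite{Valdezleon} is not phrased with the same flexibility: one must be careful that the chosen $\alpha$ for which the pr\'esimplifiable ring is ``$\alpha$'' really does produce the same $\alpha$-divisor set used in (4),(5),(10),(11). This is resolved by noting that pr\'esimplifiability forces all five types of irreducible element (prime aside) to coincide on non-zero non-units, so the vertex sets $V(G_\alpha^\beta(x))$ do not depend on which admissible $\alpha$ is chosen. Beyond that, the remaining implications are routine bookkeeping among the cited lemmas and the remark preceding the theorem.
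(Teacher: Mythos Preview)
Your proposal is correct and follows essentially the same approach as the paper: invoke \cite[Proposition 6.6]{Valdezleon} for (1)--(5), then use Theorem~\ref{thm: ffr}/Corollary~\ref{cor: ffr}, Theorem~\ref{thm: wffr}, and Theorem~\ref{thm: idf} to translate (2)--(5) into their graph-theoretic counterparts (6)--(11). Your explicit justification of the $\alpha,\beta$-independence via pr\'esimplifiability is a helpful elaboration that the paper's proof leaves implicit in the ``any (hence all)'' clause of the statement.
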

\begin{proof} Equivalences (1)-(5) are shown to be equivalent by \cite[Proposition 6.6]{Valdezleon}.  
\\
\indent (1) $\Leftrightarrow$ (6) $\Leftrightarrow$ (7) follows from Theorem \ref{thm: ffr}.  
\\
\indent (8) (resp. (9)) is a restatement of (2) (resp. (3)) and applying the equivalence from Theorem \ref{thm: wffr}.   
\\
\indent (10) (resp. (11)) is a restatement of (4) (resp. (5)) and applying the equivalence from Theorem \ref{thm: idf}.
\end{proof}

If we are working with Noetherian rings, we can use another result, \cite[Theorem 3.9]{Valdezleon} to add even more equivalent statements to the preceding theorem.
\begin{theorem} \label{thm: valdez bfr} (\cite[Theorem 3.9]{Valdezleon}) For a Noetherian commutative ring $R$, we let $\alpha \in \{$ atomic, strongly atomic, m-atomic, very strongly atomic $\}$ and $\beta \in \{ $ associate, strong associate, very strong associate $\}$.  Then the following are equivalent for any (hence all) choices of $\alpha$ and $\beta$.
\begin{enumerate} 
\item $R$ is a BFR.
\item $R$ is pr\'esimplifiable.
\item $\cap_{i=1}^\infty (y^n)=0$ for each non-unit $y\in R$.
\item $\cap_{i=1}^\infty I^n=0$ for each proper ideal $I$ of $R$.
\end{enumerate}
\end{theorem}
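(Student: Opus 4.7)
The plan is to close the cycle (1) $\Rightarrow$ (2) $\Rightarrow$ (3) $\Rightarrow$ (2) $\Rightarrow$ (1) and verify (3) $\Leftrightarrow$ (4) in parallel, using Krull's intersection theorem (where Noetherianness is essential) as the main technical tool, and reserving the genuinely substantive work for (2) $\Rightarrow$ (1).

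The implication (1) $\Rightarrow$ (2) is formal and does not need Noetherianness: if $R$ fails pr\'esimplifiability, there exist a non-zero $x$ and a non-unit $y$ with $x = xy$, whence $x = xy^n$ for every $n$, yielding factorizations of the non-unit $x$ of arbitrary length and contradicting BFR (note that $x \in (y)$ is proper, so $x$ is a non-unit). For (2) $\Rightarrow$ (3), I would apply Krull's intersection theorem, which gives $\bigcap_n (y^n) = \{ r \in R : (1+by) r = 0 \text{ for some } b \in R \}$; from $(1+by) r = 0$ we get $r = (-by) r$, and since $y$ a non-unit forces $-by$ to be a non-unit (a multiple of a non-unit is always a non-unit), pr\'esimplifiability then gives $r = 0$. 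Conversely, (3) $\Rightarrow$ (2) is direct: $x = xy$ gives $x = x y^n \in (y^n)$ for every $n$, so $x \in \bigcap_n (y^n) = 0$. The equivalence (3) $\Leftrightarrow$ (4) proceeds similarly: (4) $\Rightarrow$ (3) is trivial by taking $I = (y)$, and for (3) $\Rightarrow$ (4), Krull gives $\bigcap_n I^n = \{ r : (1+a) r = 0 \text{ for some } a \in I \}$; iterating $(1+a) r = 0$ yields $r = (-a)^n r \in (a^n)$ for all $n$, and since $a \in I$ is a non-unit, (3) gives $r \in \bigcap_n (a^n) = 0$.

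The main obstacle is (2) $\Rightarrow$ (1). My approach is to fix a non-zero non-unit $x$ together with an arbitrary factorization $x = a_1 \cdots a_n$ into non-units (necessarily non-zero, since $x \neq 0$), and analyze the chain of principal ideals
\[
(x) \subsetneq (a_1 \cdots a_{n-1}) \subsetneq \cdots \subsetneq (a_1) \subsetneq R.
\]
Strictness follows from pr\'esimplifiability: if $(a_1 \cdots a_{k-1}) = (a_1 \cdots a_k)$, then $a_1 \cdots a_{k-1} = s (a_1 \cdots a_k) = (s a_k)(a_1 \cdots a_{k-1})$ for some $s \in R$, and because $a_1 \cdots a_{k-1}$ is a non-zero divisor of $x$, pr\'esimplifiability forces $s a_k$ to be a unit, contradicting the fact that $a_k$ is a non-unit. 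Noetherianness terminates any single such chain, but extracting a \emph{uniform} bound $N(x)$ valid across all factorizations of $x$ is the subtle point that requires a further input. This is exactly the content of Valdez-Leon's argument in \cite[Theorem 3.9]{Valdezleon}, which analyzes chains of principal ideals inside the Noetherian quotient $R/(x)$ (combining the finiteness of associated primes with Krull intersection in $R/(x)$) to produce such an $N(x)$; I would invoke that analysis directly to close the cycle.
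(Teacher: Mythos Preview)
The paper does not actually prove this theorem: it is stated with the citation \cite[Theorem 3.9]{Valdezleon} and then immediately used, with no proof environment following it. So there is no ``paper's own proof'' to compare against; both you and the paper ultimately defer to Anderson and Valdez-Leon for the substantive content.

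That said, your sketch is more than the paper provides, and the easy implications you spell out are correct. The step (1) $\Rightarrow$ (2) is the standard observation that a failure of pr\'esimplifiability produces arbitrarily long factorizations; your uses of the Krull intersection theorem for (2) $\Leftrightarrow$ (3) and (3) $\Rightarrow$ (4) are sound (note that Krull actually gives a single $a\in I$ with $(1-a)\cdot\bigcap I^n=0$, which is slightly stronger than what you use, but your phrasing suffices); and your strict-chain argument from pr\'esimplifiability is correct. You are also right to flag that Noetherianness alone does not bound the length of \emph{all} strict principal chains above $(x)$ uniformly, so (2) $\Rightarrow$ (1) genuinely needs the additional analysis you defer to the reference. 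In short, your proposal is a faithful expansion of what the paper leaves as a citation, with the same ultimate reliance on \cite{Valdezleon} for the hard direction.
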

The following corollary lists several more equivalent characterizations of a $\beta$-FFR for any choice of associate.
\begin{corollary} For a Noetherian ring $R$, the following conditions are equivalent.
\begin{enumerate}
\item $R$ is a $\beta$-FFR.
\item $R$ is a BFR and a $\beta$-WFFR.
\item $R$ is pr\'esimplifiable and a $\beta$-WFFR.
\item $\cap_{i=1}^\infty (y^n)=0$ for each non-unit $y\in R$ and $R$ is a $\beta$-WFFR.
\item $\cap_{i=1}^\infty I^n=0$ for each proper ideal $I$ of $R$ and $R$ is a $\beta$-WFFR.
\item $R$ is a BFR and a $\alpha$-$\beta$-divisor finite ring.
\item $R$ is a pr\'esimplifiable and a $\alpha$-$\beta$-divisor finite ring.
\item $\cap_{i=1}^\infty (y^n)=0$ for each non-unit $y\in R$ and $R$ is a $\alpha$-$\beta$-divisor finite ring.
\item $\cap_{i=1}^\infty I^n=0$ for each proper ideal $I$ of $R$ and $R$ is a $\alpha$-$\beta$-divisor finite ring.
\item For all non-units, $x\in R$, we have $\sum_{a\in V(G^{\beta}_{\emptyset}(x))}\text{degl}(a) < \infty.$ 
\item For all non-units, $x\in R$, we have $\vert E(G^{\beta}_{\emptyset}(x))\vert < \infty$.
\item $R$ is a BFR and for all $x\in R$ not a unit, $\vert V(G^{\beta}_{\emptyset}(x)) \vert < \infty$.
\item $R$ is a pr\'esimplifiable and for all $x\in R$ not a unit, $\vert V(G^{\beta}_{\emptyset}(x)) \vert < \infty$.
\item $\cap_{i=1}^\infty (y^n)=0$ for each non-unit $y\in R$ and for all $x\in R$ not a unit, $\vert V(G^{\beta}_{\emptyset}(x)) \vert < \infty$.
\item $\cap_{i=1}^\infty I^n=0$ for each proper ideal $I$ of $R$ and for all $x\in R$ not a unit, $\vert V(G^{\beta}_{\emptyset}(x)) \vert < \infty$.
\item $R$ is a BFR and $x\in R$ not a unit, $\vert V(G^{\beta}_{\alpha}(x)) \vert < \infty$.
\item $R$ is a pr\'esimplifiable and $x\in R$ not a unit, $\vert V(G^{\beta}_{\alpha}(x)) \vert < \infty$.
\item $\cap_{i=1}^\infty (y^n)=0$ for each non-unit $y\in R$ and $x\in R$ not a unit, $\vert V(G^{\beta}_{\alpha}(x)) \vert < \infty$.
\item $\cap_{i=1}^\infty I^n=0$ for each proper ideal $I$ of $R$ and for all non-units $x\in R$, $\vert V(G^{\beta}_{\alpha}(x)) \vert < \infty$.
\end{enumerate}
\end{corollary}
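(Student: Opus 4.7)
The plan is to deduce the corollary by chaining two results already in hand: Theorem \ref{thm: ffr-wffr}, which handles the non-Noetherian portion of the equivalences, and Theorem \ref{thm: valdez bfr}, which supplies the additional Noetherian characterizations of the BFR property.

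First I would observe that Theorem \ref{thm: ffr-wffr} gives immediately that items (1), (2), (3), (6), (7), (10)--(13), (16), and (17) of the corollary are mutually equivalent for any commutative ring $R$ (these correspond, in order, to items (1) through (11) of that theorem, and they hold for any, hence all, choices of $\alpha$ and $\beta$). This reduces the remaining work to showing that each of the eight new items (4), (5), (8), (9), (14), (15), (18), (19) is equivalent to one of the items already captured.

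Next I would apply Theorem \ref{thm: valdez bfr}, which, under the Noetherian hypothesis, asserts the pairwise equivalence of the four conditions ``$R$ is a BFR'', ``$R$ is pr\'esimplifiable'', ``$\bigcap_{n=1}^\infty (y^n)=0$ for every non-unit $y\in R$'', and ``$\bigcap_{n=1}^\infty I^n = 0$ for every proper ideal $I$ of $R$''. Each of the eight remaining items is obtained from items (2), (6), (12), (16) (which assume BFR) or from items (3), (7), (13), (17) (which assume pr\'esimplifiability) by substituting one of the two intersection conditions in place of that hypothesis. Since the Noetherian assumption is in force throughout the corollary, Theorem \ref{thm: valdez bfr} lets us freely interchange these four hypotheses, so each new item is equivalent to its source item, completing the list.

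No serious obstacle arises; the proof is essentially bookkeeping. The cleanest presentation would be a brief paragraph that invokes Theorem \ref{thm: ffr-wffr} to obtain the eleven previously established equivalences, then cites Theorem \ref{thm: valdez bfr} to freely substitute any of the four Noetherian-equivalent hypotheses wherever ``BFR'' or ``pr\'esimplifiable'' appears as a conjunct, yielding the remaining eight equivalences. The only point that requires care is verifying that each new item differs from its source item only by the swapped conjunct, which a direct inspection of the enumeration confirms.
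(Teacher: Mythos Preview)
Your proposal is correct and follows exactly the paper's approach: the paper's proof is a single sentence stating that the corollary ``directly combines Theorem \ref{thm: valdez bfr} with Theorem \ref{thm: ffr-wffr}.'' Your write-up simply spells out this combination in more detail, identifying precisely which items come from Theorem \ref{thm: ffr-wffr} and which arise by swapping hypotheses via Theorem \ref{thm: valdez bfr}.
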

\begin{proof}  This theorem directly combines Theorem \ref{thm: valdez bfr} with Theorem \ref{thm: ffr-wffr}.

\end{proof}

\indent The following theorem is one of the nicest results from the work by J. Coykendall and J. Maney, in \cite{Coykendall}.  In it, the authors were studying irreducible divisor graphs in the integral domain case.
\begin{theorem}(\cite[Theorem 5.1]{Coykendall}) If $D$ is an atomic domain, then the following are equivalent.
\begin{enumerate}
\item 1. R is a UFD;
\item For each non-zero non-unit $x\in R$, $G(x)$ is a pseudo-clique;
\item For each non-zero non-unit $x\in R$, $\overline{G}(x)$ is a clique;
\item For each non-zero non-unit $x\in R$, $G(x)$ is connected.
\end{enumerate}
\end{theorem}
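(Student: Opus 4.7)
The plan is to prove the four conditions equivalent via the cycle $(1)\Rightarrow(2)\Rightarrow(3)\Rightarrow(4)\Rightarrow(1)$, with all substantive content concentrated in the last implication. For $(1)\Rightarrow(2)$, I would begin with the unique factorization $x=\pi_1^{e_1}\cdots\pi_k^{e_k}$ guaranteed by the UFD hypothesis; then $V(G(x))=\{\pi_1,\ldots,\pi_k\}$, every pair $\pi_i\pi_j$ with $i\neq j$ divides $x$ (cancel the remaining prime powers), and each relation $\pi_i^{e_i}\mid x$ contributes $e_i-1$ loops, so $G(x)$ is complete with loops, i.e., a pseudo-clique. The implication $(2)\Rightarrow(3)$ is immediate from the definitions, since removing loops from a pseudo-clique leaves a clique. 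For $(3)\Rightarrow(4)$, a non-empty clique is connected and $G(x)$ differs from $\overline{G}(x)$ only by loops, which do not affect connectedness.

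For the substantive direction $(4)\Rightarrow(1)$, I would argue the contrapositive and build, from the hypothesis that $D$ is atomic and not a UFD, a non-unit $x$ with $G(x)$ disconnected. The first ingredient is the standard fact that in an atomic non-UFD some irreducible $p$ is not prime: given essentially distinct irreducible factorizations $p_1\cdots p_n=q_1\cdots q_m$, cancellation of common associates leaves us with $n,m\geq 2$ and no $p_i\sim q_j$, whence $p:=p_1$ is irreducible, divides $q_1\cdots q_m$, yet is associate to no $q_j$ and hence not prime. The second and more delicate ingredient is upgrading this to the existence of irreducibles $\alpha,\beta$ with $p\mid\alpha\beta$ but $p\nmid\alpha$ and $p\nmid\beta$. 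I plan to extract such a pair via a minimal subset $T\subseteq\{q_1,\ldots,q_m\}$ with $p\mid\prod_{t\in T}t$: if $|T|=2$ we take those two elements, and for $|T|\geq 3$ we iteratively refactor a product of two elements of $T$ into irreducibles, using induction on $\ell(a)+\ell(b)$ over valid witness pairs $(a,b)$ of non-units with $p\mid ab$, $p\nmid a$, and $p\nmid b$.

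Having secured $\alpha,\beta$, I would set $x=\alpha\beta$ and examine $G(x)$. First, $p\not\sim\alpha$ (since $p\nmid\alpha$), and $p\in V(G(x))$ because $p\mid\alpha\beta=x$. The key observation is that any edge from $\alpha$ to a vertex $q$ requires $\alpha q\mid\alpha\beta$, i.e., $q\mid\beta$; since $\beta$ is irreducible and $q$ is an irreducible non-unit, this forces $q\sim\beta$. Hence the only neighbor of $\alpha$ distinct from itself is (the vertex representing) $\beta$, with loops on $\alpha$ only if $\alpha\sim\beta$. A symmetric analysis applies to $\beta$. Consequently the connected component of $\alpha$ in $G(x)$ is either $\{\alpha,\beta\}$ or the singleton $\{\alpha\}$ (in the case $\alpha\sim\beta$), which does not contain $p$; thus $G(x)$ is disconnected, completing the contrapositive.

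The main obstacle is the reduction producing irreducible witnesses $\alpha,\beta$. The minimal-subset argument is clean only when $|T|=2$; in a general atomic domain one may have $|T|\geq 3$, and refactoring a product of two elements of $T$ into irreducibles can increase the number of factors, so iteration is not guaranteed to decrease $|T|$. The fix I envision is a careful lexicographic induction on $(\ell(a)+\ell(b),\ell(a))$ over valid witness pairs: factoring $a=a_1 a_2$ with $a_1$ irreducible and case-splitting on whether $p\mid a_2 b$ forces the minimizer to consist of two irreducibles, because the case $p\mid a_2 b$ yields a pair with strictly smaller sum while the case $p\nmid a_2 b$ yields a pair with strictly smaller first coordinate at the same sum. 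Once this lemma is in place, the graph-theoretic finish above is routine.
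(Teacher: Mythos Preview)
The paper does not supply its own proof of this statement; it is quoted as \cite[Theorem 5.1]{Coykendall} to motivate the subsequent extensions to rings with zero-divisors, so there is no in-paper argument to compare against. Your outline is the standard Coykendall--Maney strategy: the chain $(1)\Rightarrow(2)\Rightarrow(3)\Rightarrow(4)$ is routine, and for $(4)\Rightarrow(1)$ one finds an irreducible non-prime $p$, upgrades to irreducibles $\alpha,\beta$ with $p\mid\alpha\beta$ but $p\nmid\alpha$ and $p\nmid\beta$, and then checks that in $G(\alpha\beta)$ the neighbors of $\alpha$ are exactly the irreducible divisors of $\beta$---namely $\beta$ alone---so the component of $\alpha$ is $\{\alpha,\beta\}$ and misses $p$. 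That graph-theoretic finish is correct as written.

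The lexicographic induction you propose for the reduction step, however, does not close. Minimizing $(\ell(a)+\ell(b),\ell(a))$ and factoring $a=a_1a_2$ with $a_1$ irreducible does force $\ell(a)=1$ at a minimizer, exactly as you argue. But the order is not symmetric, and the analogous reduction on $b$ fails: with $\ell(a)=1$ and $b=b_1b'$ taken from a minimal-length factorization, the favorable cases $p\mid ab'$ or $p\mid ab_1$ each contradict minimality of the sum, yet the residual case $p\nmid ab'$, $p\nmid ab_1$, $p\nmid b_1b'=b$ while $p\mid ab_1b'$ cannot be excluded for a non-prime $p$. In that case every regrouping into a witness pair---$(b_1,ab')$, $(ab_1,b')$, or $(b',ab_1)$---has sum at most the original and first coordinate at least $1=\ell(a)$, hence is not lexicographically smaller. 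Since a general atomic domain need not satisfy ACCP or be a BFD, there is no ambient well-foundedness to fall back on, and your descent stalls. You need a different invariant (or a different organization of the minimal-subset idea) to secure \emph{both} witnesses irreducible; this is exactly the delicate point, and your sketch does not yet handle it.
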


\indent Again, a $\alpha$-$\beta$-UFR is certainly a $\beta$-FFR which is a BFR and hence pr\'esimplifiable.  Again all of the associate relations coincide and irreducible, strongly irreducible, m-irreducible and very strongly irreducible coincide for any choice of a $\alpha$-$\beta$-UFR.  This is discussed by D.D. Anderson and S. Valdez-Leon preceding Definition 4.3 in \cite{Valdezleon}.  This leads to the following result.
\begin{theorem}\label{thm: UFR AV} (\cite[Theorem 4.4]{Valdezleon})  Let $R$ be a commutative ring and for any (and all) choice of $\alpha \in \{$ atomic, strongly atomic, m-atomic, very strongly atomic $\}$ and $\beta \in \{$ associate, strong associate, very strong associate $\}$, then the following are equivalent.
\begin{enumerate}
\item $R$ is a $\alpha$-$\beta$-UFR.
\item $R$ is either (a) a UFD (b) an SPIR or (c) a quasi-local ring with $M^2=0$ where $M$ is the unique maximal ideal of $R$.  
\item $R$ is a UFR in the sense of A. Bouvier in \cite{bouvier74b}. 
\item $R$ is a UFR in the sense of S. Galovich in \cite{Galovich}.
\end{enumerate}
\end{theorem}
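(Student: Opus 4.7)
The plan is to establish the four-way equivalence by treating (2), the structural classification, as the hub: direct verification yields $(2) \Rightarrow (1), (3), (4)$, and the substantive content lies in the reverse implications. In detail, if $R$ is a UFD the UFR conditions hold trivially; if $R$ is an SPIR then the maximal ideal is generated by a single atom $\pi$ up to units, so every non-zero non-unit is a unit times a power of $\pi$, making the factorizations unique up to associate in every sense; and if $R$ is quasi-local with $M^2 = 0$, then the product of any two non-units is zero, so the only factorizations of a non-unit $x$ are of the trivial form $x = u(u^{-1}x)$ with $u \in U(R)$, and uniqueness is immediate.

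Next I would show that any $\alpha$-$\beta$-UFR $R$ is pr\'esimplifiable. Suppose $x = xy$ with $x \ne 0$ and $y$ a non-unit. Since $R$ is $\alpha$, write $x = a_1 \cdots a_n$ and $y = b_1 \cdots b_m$ as $\alpha$-factorizations with $m \geq 1$. Substituting, $x = a_1 \cdots a_n b_1 \cdots b_m$ is another $\alpha$-factorization of $x$ of strictly greater length, contradicting the length uniqueness required by the UFR definition. Invoking Theorem \ref{thm: presimplifiable}, pr\'esimplifiability collapses all associate relations and all notions of irreducible, so the hypothesis does not depend on the choice of $\alpha$ and $\beta$, and we may henceforth work with ordinary irreducibles and ordinary associates.

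Now split into cases. If $R$ is an integral domain, the UFR condition specializes exactly to the classical UFD condition, giving case (2)(a). If $R$ has zero-divisors, pr\'esimplifiability immediately forces $Z(R) \subseteq J(R)$: for if $ab = 0$ with $a \ne 0$, then $a(1 - rb) = a$ for every $r \in R$, so $1 - rb \in U(R)$ for every $r$, placing $b$ in the Jacobson radical. Combined with the fact that an $\alpha$-$\beta$-UFR is a $\beta$-FFR and therefore a BFR (by the diagram in Section 2), one can then argue that $R$ is quasi-local with nilpotent maximal ideal $M$. Finally, analyzing atomic factorizations of elements in $M$ and of products in $M^2$, one uses the strict length-and-multiset uniqueness to extract a sharp dichotomy: either all atoms are associate, whence $M$ is principal and $R$ is an SPIR, yielding (2)(b); or no two atoms have a non-zero product, whence $M^2 = 0$, yielding (2)(c). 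The equivalence of (1) with Bouvier's definition (3) and Galovich's definition (4) then follows by unwinding those definitions against the structural list (2), as indicated in \cite{Valdezleon}.

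The main obstacle will be the classification step in the non-domain case, specifically proving that $R$ is quasi-local and then forcing the sharp dichotomy between $M$ principal and $M^2 = 0$. The delicate part is ruling out intermediate structures---for example, the existence of two non-associate atoms whose product is a non-zero non-unit---by using the unique factorization hypothesis to show that any such product must either collapse to a unit multiple of a single atom (pushing the ring toward the SPIR case) or vanish (pushing it toward the $M^2 = 0$ case). This is where the bulk of the technical work in \cite[Theorem 4.4]{Valdezleon} is concentrated, and it is what makes the theorem non-trivial.
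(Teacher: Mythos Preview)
The paper does not actually prove this theorem: it is stated with the citation \cite[Theorem 4.4]{Valdezleon} and no proof environment follows. It functions purely as a quoted result from Anderson and Valdez-Leon, used immediately afterward to deduce Theorem~\ref{thm: UFR}. So there is no in-paper argument to compare your proposal against.

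That said, your sketch is a faithful outline of how the Anderson--Valdez-Leon proof proceeds, and your identification of the hard step is accurate. The pr\'esimplifiability argument and the inclusion $Z(R)\subseteq J(R)$ are correct as written. The portion you flag as the ``main obstacle''---passing from pr\'esimplifiable plus BFR to quasi-local with nilpotent maximal ideal, and then forcing the SPIR versus $M^2=0$ dichotomy---is indeed where the real work in \cite{Valdezleon} lies, and you are right not to pretend it is routine. If you intend to supply a self-contained proof rather than a citation, that step would need to be filled in completely; as a summary of the literature argument, what you have is fine.
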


\begin{theorem} \label{thm: UFR} Let $R$ be a commutative ring and let $\alpha \in \{$ atomic, strongly atomic, m-atomic, very strongly atomic $\}$ and $\beta \in \{$ associate, strong associate, very strong associate $\}$.  If $R$ satisfies any of the following equivalent conditions:
\begin{enumerate}
\item $R$ is a $\alpha$-$\beta$-UFR,
\item $R$ is either (a) a UFD (b) an SPIR or (c) a quasi-local ring with $M^2=0$ where $M$ is the unique maximal ideal of $R$,
\item $R$ is a UFR in the sense of A. Bouvier in \cite{bouvier74b}, or
\item $R$ is a UFR in the sense of S. Galovich in \cite{Galovich}, 
\end{enumerate}
then for any non-unit $x\in R$, $\overline{G}_\alpha^\beta (x) \cong K_{N(x)}$ for some $N(x) \in \N$, where $K_n$ is the complete graph on $n$ vertices.  Moreover, $G_\alpha^\beta (x)$ is a pseudo-clique.  
\end{theorem}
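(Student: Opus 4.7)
The plan is to reduce to a single canonical choice of $\alpha$ and $\beta$, and then read off the structure of the graph directly from the unique factorization of $x$. By Theorem \ref{thm: UFR AV}, an $\alpha$-$\beta$-UFR is in particular a BFR, hence pr\'esimplifiable, and so by Theorem \ref{thm: presimplifiable} the graph $G_\alpha^\beta(x)$ is independent of the choice of $\alpha$ and $\beta$ (once a representative system is fixed). It therefore suffices to treat $\alpha = $ atomic and $\beta = $ associate. Fix a non-unit $x \in R$ and a unique-up-to-associate atomic factorization
\[ x = a_1^{e_1} a_2^{e_2} \cdots a_s^{e_s}, \]
where the $a_i$ are pairwise non-associate atoms and each $e_i \geq 1$. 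Set $N(x) := s$.

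First I would identify the vertex set. Each $a_i$ clearly divides $x$, so its chosen associate representative lies in $V(G_\alpha^\beta(x))$, giving at least $s$ vertices. Conversely, if $a$ is any atomic divisor of $x$, write $x = a b$ and then factor $b$ into atoms $b = b_1 \cdots b_t$ (possible since $R$ is atomic) to obtain an atomic factorization $x = a b_1 \cdots b_t$. By uniqueness in the UFR, this factorization is a rearrangement of $a_1^{e_1}\cdots a_s^{e_s}$ up to associate, so $a \sim a_i$ for some $i$. Therefore $V(G_\alpha^\beta(x)) = \{[a_1], \ldots, [a_s]\}$ has exactly $s$ elements.

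Next I would verify that every pair of distinct vertices is adjacent. For $i \neq j$, the factorization $x = a_1^{e_1} \cdots a_s^{e_s}$ already exhibits both $a_i$ and $a_j$ as factors (since $e_i, e_j \geq 1$), so it rewrites as $x = a_i a_j \cdot (\text{rest})$, which is precisely the witnessing factorization required for $a_ia_j \in E(G_\alpha^\beta(x))$. Hence $\overline{G}_\alpha^\beta(x) \cong K_s = K_{N(x)}$. Finally, for loops: if $e_i \geq 2$ then $a_i^{e_i} \mid x$ via the factorization above, contributing $e_i - 1$ loops at vertex $[a_i]$; if $e_i = 1$, no loops appear at $[a_i]$ because any factorization of $x$ containing $a_i^2$ would, by uniqueness, force the exponent of $a_i$ in the unique factorization to be at least $2$. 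Thus $G_\alpha^\beta(x)$ is a complete graph on $s$ vertices with a finite (possibly zero) number of loops at each vertex, i.e., a pseudo-clique.

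The main (essentially the only) obstacle is the containment $V(G_\alpha^\beta(x)) \subseteq \{[a_1], \ldots, [a_s]\}$, which is really a reformulation of the UFR hypothesis: every atomic divisor of $x$ is associate to a factor in the canonical factorization. Everything else — completeness of $\overline{G}$, the loop count, and finiteness — is bookkeeping that follows by reading off the unique atomic factorization.
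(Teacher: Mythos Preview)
Your proof is correct and follows essentially the same approach as the paper: write the unique $\alpha$-factorization $x = a_1^{e_1}\cdots a_s^{e_s}$, read off the vertex set from uniqueness, and observe that the factorization itself witnesses all edges and the loop counts. The paper omits your reduction step (it works directly for arbitrary $\alpha,\beta$) and simply asserts the vertex-set equality that you justify more carefully; note also that your reduction via Theorem~\ref{thm: presimplifiable} is stated only for nonzero $x$, so strictly speaking you should handle $x=0$ separately, though the same uniqueness argument applies there without needing the reduction.
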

\begin{proof} By Theorem \ref{thm: UFR AV}, (1)-(4) are equivalent, so we let $R$ be a $\alpha$-$\beta$-UFR.  Let $x\in R$ be a non-unit.  Let $x=a_1 \cdots a_n$ be the unique $\alpha$-factorization up to $\beta$.  We suppose $a_1, \ldots a_s$ with $s \leq n$ are distinct up to $\beta$. We may now group like factors up to $\beta$ and rewrite the $\alpha$-factorization as $x=a_1^{e_1}a_2^{e_2} \cdots a_s^{e_s}$ with $e_i \geq 1$ and $e_1 + e_2 + \cdots e_s=n$.  Since this is the only $\alpha$-factorization of $x$ up to $\beta$, we have $V(G_\alpha^\beta(x))=\{a_1, \ldots a_s \}$. We see $a_ia_j \in E(G_\alpha^\beta(x))$ for all $i \neq j$ and there are $e_i -1$ loops on vertex $a_i$.  This proves that $G_\alpha^\beta (x)$ is a pseudo-clique.  We set $N(x)=s$ and see that indeed $\overline{G}_\alpha^\beta (x) \cong K_{s}$ as desired.
\end{proof}
\indent Unfortunately, the full analogues of \cite[Theorem 5.1]{Coykendall} will not hold with zero-divisors as the next example demonstrates.
\begin{example} Let $R=\Z/2\Z \times \Z/2\Z$ and let $\alpha \in \{ \ \emptyset$, atomic, strongly atomic, m-atomic $\}$ and let $\beta \in \{\ \emptyset$, associate, strongly associate $\}$. \end{example}\ 
\\
\indent We note that for all $i \geq 1$ we have
$$(0,0)=(1,0)^i(0,1)=(1,0)(0,1)^i$$
as the only valid non-trivial $\alpha$ factorizations of $(0,0)$.  Moreover, the only factorizations of $(0,1)$ are of the form $(0,1)=(0,1)^i$.  Certainly $(0,1) \sim (0,1)$ and $(0,1) \approx (0,1)$ showing $(0,1)$ is atomic, strongly atomic, and m-atomic.  Similarly, the only factorizations of $(1,0)$ are of the form $(1,0)=(1,0)^i$.  Again, $(1,0) \sim (1,0)$ and $(1,0) \approx (1,0)$ showing $(1,0)$ is atomic,strongly atomic, and m-atomic.  
\\
\indent We note that $(0,1)$ and $(1,0)$ are not very strongly atomic since they are non-trivial idempotents since $(1,0)=(1,0)(1,0)$ and $(0,1)=(0,1)(0,1)$ are non-trivial factorizations of $(1,0)$ and $(0,1)$ respectively.  
\\
\indent For $\alpha \in \{ \ \emptyset$, atomic, strongly atomic, m-atomic $\}$ and $\beta \in \{\ \emptyset$, associate, strongly associate $\}$, we have the following divisor graphs.
\begin{figure}[H]
	 \centering
		 \includegraphics[scale=.7]{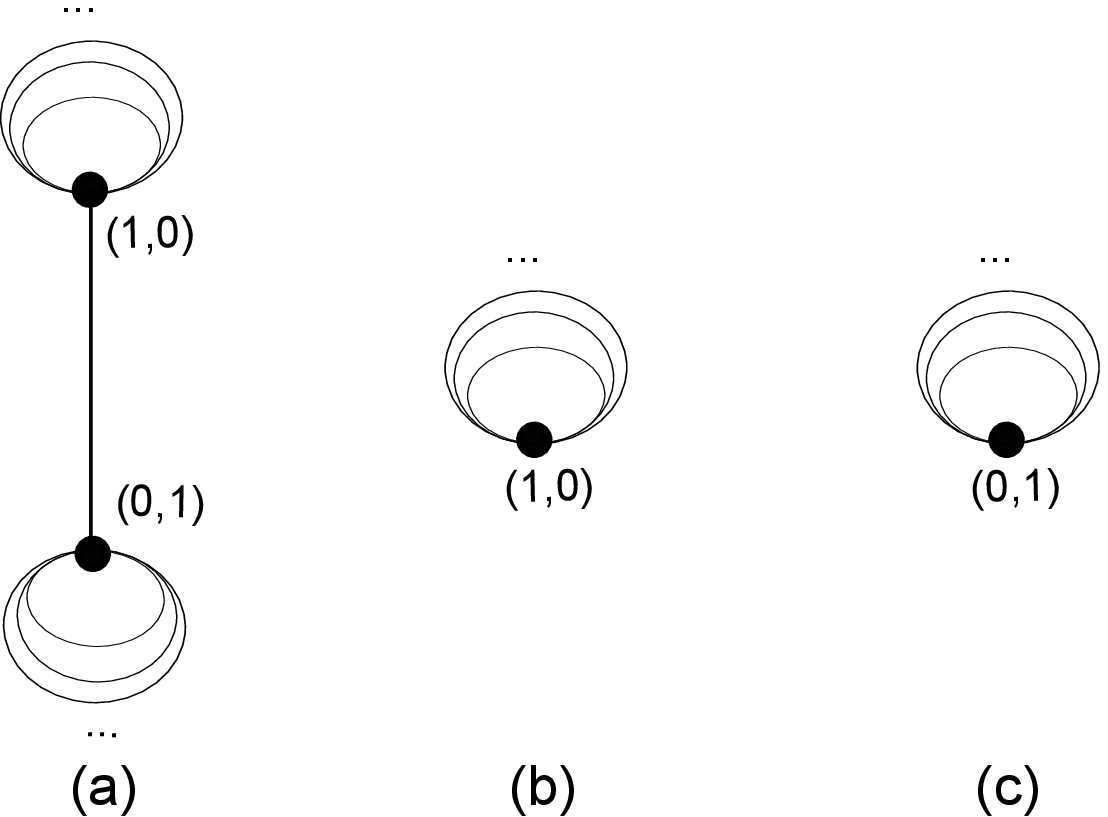}
	 \caption{(a) $G_\alpha^\beta \left( (0,0)\right)$\ \ \ (b) $G_\alpha^\beta \left( (1,0)\right)$\ \ \  (c) $G_\alpha^\beta \left( (0,1)\right)$}
	 \label{fig:Irreducible divisor graph 00}
 \end{figure}

\indent This shows that while the $\alpha$-$\beta$-divisor graphs of $(0,0)$, $(1,0)$ and $(0,1)$ are complete, connected and have a finite number of vertices (albeit with an infinite number of loops on each vertex), $R$ is neither a $\alpha$-$\beta$-UFR, $\alpha$-HFR, $\beta$-FFR nor even a BFR.  
\\
\indent This example also demonstrates that the converse of Theorem \ref{thm: accp} will not hold.  A finite ring certainly satisfies ACCP, on the other hand, all vertices have infinite degree when you include loops.

\section*{Acknowledgment}
The author would like to acknowledge that some of this research was conducted under the supervision of Professor Daniel D. Anderson while serving as a Presidential Graduate Research Fellow at The University of Iowa.

\end{document}